\begin{document}

\newcommand{\Sh}{\operatorname{Sh}}
\newcommand{\LM}{\operatorname{LM}}
\renewcommand{\Vec}{\mathbf{Vec}}
\newcommand{\FI}{\mathbf{FI}}
\newcommand{\FIop}{\mathbf{FI^{op}}}
\newcommand{\bC}{\mathbf{PM}}
\newcommand*\closure[1]{\overline{#1}}

\title{Image closure of symmetric wide-matrix varieties}
\author{Jan Draisma, Rob H.~Eggermont, Azhar Farooq, and Leandro Meier}
\thanks{JD was partially supported by Vici grant
639.033.514 from the Netherlands Organisation for Scientific
Research (NWO) and Project Grant 200021\_191981 from the
Swiss National Science Foundation (SNF).
AF was supported by Vici grant 639.033.514.
RE was supported by Veni grant 016.Veni.192.113 from NWO}

\maketitle

\begin{abstract}
Let $X$ be an affine scheme of $k\times \NN$-matrices and
$Y$ be an affine scheme of $\NN \times \cdots \times
\NN$-dimensional tensors. The group $\Sym(\NN)$ acts
naturally on both $X$ and $Y$ and on their coordinate rings.
We show that the Zariski closure of the image of a
$\Sym(\NN)$-equivariant morphism of schemes from $X$ to $Y$ is defined by finitely many $\Sym(\NN)$-orbits in the coordinate ring of $Y$. Moreover, we prove that the closure of the image of this map is $\Sym(\NN)$-Noetherian, that is, every descending chain of $\Sym(\NN)$-stable closed subsets stabilizes.
\end{abstract}

\section{Introduction}

In recent years, many finiteness properties of rings with countably
many variables and infinite dimensional varieties have been studied,
especially of those acted upon by the infinite symmetric group. A
number of fundamental results have been established, but still many have
yet to be discovered.

Most of these results build on the theorem from
\cite{cohen:metabelian,cohen:closure}, rediscovered in
\cite{aschenbrenner-hillar:finitesymmetric,hillar-sullivant:GBinfdim}, that
the polynomial ring in $k \times \NN$ variables is $\Sym(\NN)$-Noetherian
with the natural action of the symmetric group $\Sym(\NN)$. This implies
that the space of $k\times \NN$-matrices equipped with the induced
action of $\Sym(\NN)$ is topologically $\Sym(\NN)$-Noetherian. However,
already the space of $\NN \times \NN$-matrices equipped with the natural
action of $\Sym (\NN)$, or even with the action of $\Sym(\NN) \times
\Sym(\NN)$, is not equivariantly Noetherian, and it gets only worse for
higher-dimensional tensors. This is problematic, because many varieties
of relevance to applications, such as the $k$-factor model
\cite{draisma:kfactor} and hierarchical models
\cite{hillar-sullivant:GBinfdim} naturally live in matrix or tensor spaces and
are preserved by (copies of) the symmetric group.

Hence it is interesting to find $\Sym(\NN)$-stable subvarieties of
$\NN \times \NN$-matrices, or $\NN \times \cdots \times \NN$-tensors,
that are defined by finitely many orbits of equations and equivariantly
Noetherian. In this paper we study such subvarieties that
arise as image closures of $\Sym(\NN)$-equivariant maps from the space
of $k \times \NN$-matrices, and we show that these are always defined,
set-theoretically, by finitely many $\Sym(\NN)$-orbits of equations,
as well as themselves topologically $\Sym(\NN)$-Noetherian.

\subsection{Main Theorem}\label{ssec:main theorem}

All algebras and schemes are defined over a Noetherian ring
$K$. All schemes are \textit{affine schemes} and we will usually
drop the adjective \textit{affine}. Let $\FI$ denotes the category
of finite sets with injective maps. An important insight from
\cite{church-ellenberg-farb:FImodules} is that functors from $\FI$
are the right language for studying structures with
increasing numbers of variables acted upon by increasing
symmetric groups.

An $\FI$-algebra over $K$ is a covariant functor from $\FI$ to the
category of algebras over $K$.  An $\FI$-ideal $I$
of an $\FI$-algebra $A$ assigns to each finite set $S$ an ideal $I(S)$
of $A(S)$, in such a manner that the algebra homomorphism $A(\pi):A(S)
\to A(T)$ corresponding to any $\pi \in \Hom_\FI(S,T)$ maps $I(S)$
into $I(T)$. An $\FIop$-scheme over $K$ is a contravariant functor from $\FI$ to the
category of schemes over $K$. A closed subscheme $Z$ of an $\FIop$-scheme
$X$ is a subfunctor of $X$ such that, for each finite set $S$, $Z(S)$
is a closed subscheme of $X(S)$. Morphisms of $\FI$-algebras are natural
transformations of functors. Morphisms of $\FIop$-schemes are defined
dually.

We say that an $\FI$-algebra $A$ is {\em finitely generated in width at most $1$}
if $A$ is generated by $A(\emptyset)$ and $A(\{1\})$, and
both of these are finitely generated. This is equivalent
to the statement that $A$ can be realised as a quotient (in the category
of $\FI$-algebras over $K$) of an algebra of the form
\[ S \mapsto K[x_{ij} \mid i \in [k], j \in S]; \]
see \cite[Lemma 2.6.8]{draisma-eggermont-farooq:components}.
Geometetrically, $\Spec(A)$ is then an $\FIop$-subscheme over $K$ of
the scheme that sends $S$ to $k \times S$-matrices.

\begin{thm}[Main Theorem]\label{thm:main theorem}
Let $\phi$ be a morphism of $\FI$-algebras $B \rightarrow
A$, where $A$ and $B$ both are finitely generated
$\FI$-algebras over a Noetherian ring $K$. Suppose,
moreover, that $A$ is generated in width at most $1$. Then
the Zariski closure $\closure {\Im (\phi^*)}$ of the image
of $\Spec(A)$ under the dual map $\phi^*$ is defined, as a
set, by finitely many elements in $B$. Equivalently, there exist a finitely generated $\FI$-ideal $I$ in $B$ such that the radical of $I$ is equal to the radical of the kernel $\Ker \phi$.\\
Moreover, $\closure {\Im (\phi^*)}$ is topologically
Noetherian, that is, it satisfies the descending chain
condition on reduced closed subschemes.
\end{thm}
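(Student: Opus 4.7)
The plan is to deduce both assertions from the topological $\Sym(\NN)$-Noetherianity of $\Spec(P)$, where $P(S) = K[x_{ij} \mid i \in [k], j \in S]$ is the polynomial FI-algebra in $k \times \NN$ variables, a theorem of Cohen which was rediscovered by Aschenbrenner-Hillar-Sullivant. Using the width-$\leq 1$ hypothesis on $A$, one fixes a surjection $P \twoheadrightarrow A$ so that $\Spec(A)$ becomes a closed $\FIop$-subscheme of $\Spec(P)$ and thereby inherits topological $\Sym(\NN)$-Noetherianity. Writing $Y := \closure{\Im(\phi^*)} = V(\Ker \phi)$ set-theoretically, the first assertion amounts to producing a finitely generated $\FI$-ideal $I \subseteq B$ with $\sqrt{I} = \sqrt{\Ker\phi}$.

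The central step is to establish topological $\Sym(\NN)$-Noetherianity of $Y$. Given a descending chain $Y \supseteq W_1 \supseteq W_2 \supseteq \cdots$ of $\Sym(\NN)$-stable reduced closed subschemes, I would pull back via $\phi^*$ to obtain $\widetilde{W}_j := (\phi^*)^{-1}(W_j)$, a descending chain in $\Spec(A)$ which stabilizes. The technical obstacle, and arguably the hard part of the theorem, is that pullback can collapse strict inclusions: portions of $W_j \setminus W_{j+1}$ lying outside $\Im(\phi^*)$ are invisible in $\Spec(A)$. The plan to handle this is to refine each $W_j$ by the image closure $\closure{\phi^*(\widetilde{W}_j)} \subseteq W_j$, whose stabilization is automatic from that of $\widetilde{W}_j$, and to run a Noetherian induction on $\Spec(A)$ to control the residual pieces $W_j \setminus \closure{\phi^*(\widetilde{W}_j)}$, showing that they can strictly decrease only finitely often.

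The first assertion then follows from Noetherianity of $Y$ by choosing a cofinal family of finitely generated $\FI$-sub-ideals $I \subseteq \Ker\phi$: the corresponding closed subschemes $V(I) \supseteq Y$ inside $\Spec(B)$ can be compared by intersecting with suitable closed subschemes defined using $A$, reducing the stabilization question to a descending chain within the Noetherian $Y$; alternatively, one may prove both assertions simultaneously by a single Noetherian induction on $\Spec(A)$. The principal obstacle throughout is the asymmetry between source and target---$\Spec(A)$ is $\Sym(\NN)$-Noetherian by virtue of the width-$1$ hypothesis, but $\Spec(B)$ need not be---and the failure of $(\phi^*)^{-1}$ to preserve strict descent; resolving this requires systematic use of image closures together with Noetherian induction on $\Spec(A)$, making essential use of the width-$1$ structure of $A$.
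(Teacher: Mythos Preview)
Your proposal has a genuine gap that the vague ``Noetherian induction on $\Spec(A)$'' does not close. You correctly identify the obstacle: a strict inclusion $W_{j+1}\subsetneq W_j$ of closed subsets of $Y=\overline{\Im(\phi^*)}$ may become an equality after pulling back by $\phi^*$, because the difference can lie entirely in $Y\setminus\Im(\phi^*)$. But your proposed fix does not resolve this. The refinement $\overline{\phi^*(\widetilde W_j)}=\overline{W_j\cap\Im(\phi^*)}$ need not equal $W_j$; already for the classical map $(x,y)\mapsto (x,xy)$ on $\mathbb{A}^2$, the $y$-axis is a closed subset of the image closure whose intersection with the image is a single point. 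The ``residual pieces'' $W_j\setminus\overline{\phi^*(\widetilde W_j)}$ live in $\Spec(B)$, where you have no Noetherianity hypothesis, so an induction on $\Spec(A)$ gives you no handle on them. Likewise, your deduction of the first assertion from Noetherianity of $Y$ is circular: the sets $V(I)$ for finitely generated $I\subseteq\Ker\phi$ are \emph{supersets} of $Y$, not subsets, so the descending chain they form is not a chain inside $Y$ and Noetherianity of $Y$ says nothing about it.

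The paper's proof proceeds along an entirely different line. After reducing to the case where $B=B[k_0,\dots,k_d]$ and $A=B_1^{\otimes k}$ are free, one exploits the specific structure of $\phi$: the generator $y_{1,\dots,d}$ is sent to a polynomial with, say, $l$ monomials, and a short computation shows that $\Im(\phi^*)$ consists of projections of tensors of rank at most $l$. Hence $\overline{\Im(\phi^*)}\subseteq Z_{d,l}$, the locus where all off-diagonal $(l{+}1)\times(l{+}1)$ subdeterminants of flattenings vanish; this $Z_{d,l}$ is visibly cut out by finitely many $\FI$-orbits of equations. The crucial and nontrivial step is a tensor completion result (Proposition~\ref{prop:induction}): every off-diagonal tensor in $Z_{d,l}$ extends to a full tensor of rank at most some $N=N(d,l)$. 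This yields a \emph{new} morphism $\psi:B\to A'$ with $A'$ finitely generated in width $\leq 1$ and $Z_{d,l}\subseteq\Im(\psi^*)$ (actual image, not closure). Now $Z_{d,l}$, being a subset of the image of a topologically Noetherian space, is itself topologically Noetherian, and $\overline{\Im(\phi^*)}$ is a closed subset of it. The missing idea in your approach is precisely this: one must produce a \emph{second} parametrisation, by a width-one scheme, whose honest image already contains the closure $\overline{\Im(\phi^*)}$; the original map $\phi^*$ alone cannot see past its own image.
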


An immediate consequence of the Main Theorem is the
following.

\begin{cor}
For any $\Sym(\NN)$-equivariant morphism $\phi$ of $K$-schemes from the
scheme of $k \times \NN$-matrices to the scheme of $\NN \times \cdots
\times \NN$-tensors, the image closure
$\overline{\Im(\phi)}$ of $\phi$ is defined,
set-theoretically, by finitely many $\Sym(\NN)$-orbits of
equations. Furthermore, it is topologically $\Sym(\NN)$-Noetherian:
any descending chain
\[ \overline{\im(\phi)} \supseteq X_1 \supseteq X_2
\supseteq \ldots \]
of reduced, closed subschemes stabilises. \hfill $\square$
\end{cor}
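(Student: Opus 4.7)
The plan is to translate the corollary into the $\FI$-language and then appeal directly to the Main Theorem. I would first identify the scheme $X$ of $k \times \NN$-matrices with the $\FIop$-scheme $S \mapsto \Spec K[x_{ij} \mid i \in [k], j \in S]$, whose coordinate ring $A$ is by construction finitely generated in width at most $1$. Similarly, the scheme $Y$ of $\NN \times \cdots \times \NN$-tensors of order $d$ is the $\FIop$-scheme $S \mapsto \Spec K[y_{j_1,\ldots,j_d} \mid j_1,\ldots,j_d \in S]$, whose coordinate ring $B$ is finitely generated (in width $d$, but width $d$ is allowed on the $B$-side of the Main Theorem). Reduced closed $\Sym(\NN)$-stable subschemes of $Y(\NN)$ correspond to closed $\FIop$-subschemes of $Y$, cut out by $\Sym(\NN)$-orbits of elements drawn from the various $B([n])$.

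Next, I would upgrade the given $\Sym(\NN)$-equivariant scheme morphism $\phi\colon X(\NN) \to Y(\NN)$ to a morphism of $\FIop$-schemes $X \to Y$, equivalently to an $\FI$-algebra morphism $B \to A$. The key point is that $A([n])$ equals the subring of $A(\NN) = \bigcup_m A([m])$ fixed by the pointwise stabiliser $\Sym(\NN \setminus [n])$; hence equivariance forces the pullback of each coordinate $y_{j_1,\ldots,j_d}$ with indices in $[n]$ to lie in $A([n])$. The resulting family of algebra maps $B([n]) \to A([n])$ is automatically compatible with arbitrary $\FI$-injections $[n] \hookrightarrow [m]$, because every such injection is the restriction of some element of $\Sym(\NN)$.

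With $\phi$ now realised as a morphism of $\FI$-algebras satisfying the hypotheses of the Main Theorem, both assertions of the corollary follow. The first part of the Main Theorem produces a finitely generated $\FI$-ideal in $B$ cutting out $\closure{\Im(\phi^*)}$ set-theoretically; unpacking ``finitely generated $\FI$-ideal'' yields finitely many $\Sym(\NN)$-orbits of equations. Any descending chain of reduced closed $\Sym(\NN)$-stable subschemes of $\closure{\Im(\phi)}$ likewise transports to a descending chain of reduced closed $\FIop$-subschemes, and so stabilises by the Noetherianity part of the Main Theorem.

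The main obstacle is the bookkeeping in the second paragraph: promoting a single $\Sym(\NN)$-equivariant morphism at ``level $\NN$'' to a genuine natural transformation of $\FIop$-schemes, which requires recognising $A([n])$ as an invariant subring and verifying functoriality under all $\FI$-morphisms, not only bijections. Once this dictionary is in place, the corollary is literally a restatement of the Main Theorem.
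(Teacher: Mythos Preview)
Your proposal is correct and matches the paper's approach: the paper gives no proof at all, marking the corollary with a bare $\square$ and introducing it as ``an immediate consequence of the Main Theorem.'' Your write-up simply makes explicit the dictionary between $\Sym(\NN)$-equivariant data and $\FI$/$\FIop$-data that the paper takes for granted, and then invokes the Main Theorem exactly as intended.
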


We do not know whether $\ker \phi$ itself is necessarily finitely
generated.

\begin{conj}\label{kernel}
Let $\phi$ be the $\FI$-algebra homomorphism defined in the Main Theorem \ref{thm:main theorem}. Then the $\FI$-ideal $\Ker \phi$ is finitely generated.
\end{conj}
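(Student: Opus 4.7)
The natural starting point is to exploit the Main Theorem to get an approximation. By that theorem, there exists a finitely generated $\FI$-ideal $I \subseteq \Ker \phi$ with $\sqrt{I} = \sqrt{\Ker \phi}$. Pass to the quotient $\bar B := B/I$, which is still a finitely generated $\FI$-algebra, and let $N := \Ker \phi / I \subseteq \bar B$. Then $N$ is a nil $\FI$-ideal, and the induced map $\bar\phi : \bar B \to A$ has the same image $\phi(B)$ with $\bar B / N \cong \phi(B)$. The conjecture reduces to showing that $N$ is finitely generated as an $\FI$-ideal in $\bar B$.

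My plan would be to analyze $N$ via its powers. Consider the filtration $\bar B \supseteq N \supseteq N^2 \supseteq \cdots$. If one could establish (i) a uniform bound $N^{n+1}=0$ across the whole $\FI$-structure, and (ii) finite generation of each successive quotient $N^i / N^{i+1}$ as an $\FI$-module over $\phi(B)$, then pulling back generators through the short exact sequences $0 \to N^{i+1} \to N^i \to N^i/N^{i+1} \to 0$ would yield finite generation of $N$ itself.

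For step (ii), the natural leverage is the Cohen–Aschenbrenner–Hillar–Sullivant theorem: because $A$ is generated in width at most $1$, the $\FI$-algebra $A$ is $\FI$-Noetherian, hence finitely generated $\FI$-modules over $A$ are Noetherian. The hope would be to promote $N^i/N^{i+1}$, which is a priori only a $\phi(B)$-module, to an $A$-module by base change and then descend finite generation back to $\phi(B)$. This descent is not automatic, since $\phi(B) \hookrightarrow A$ need not be flat, but it is an avenue I would pursue, perhaps together with the shift-and-localize techniques of Nagel--R\"omer and Sam--Snowden, which have been the standard tool for turning topological Noetherianity into stronger finiteness assertions in the $\FI$-world.

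The main obstacle, and the reason this is stated as a conjecture rather than a theorem, is step (i): there is no obvious mechanism to force a nil $\FI$-ideal in a finitely generated $\FI$-algebra to be globally of bounded nilpotency exponent, precisely because the algebras $\bar B(S)$ are not assumed Noetherian as $|S| \to \infty$. This is exactly the gap between topological Noetherianity (which we have, by the Main Theorem) and scheme-theoretic Noetherianity (which would give the ideal-theoretic statement directly). Closing this gap seems to require either a fundamentally new handle on nilpotents in non-Noetherian $\FI$-algebras, or a structure theorem ruling out that $\Ker \phi$ carries genuine embedded-primary information beyond what its radical records; any serious attempt will stand or fall on this point.
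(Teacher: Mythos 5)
You have correctly recognized that this statement is a \emph{conjecture} in the paper, not a theorem: the authors explicitly write that they do not know whether $\Ker\phi$ is finitely generated, and they offer no proof. So there is no ``paper's own proof'' to compare against, and your proposal is, as you yourself say, a plan of attack rather than a proof.

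That said, your sketch is a reasonable reduction. Using the Main Theorem to produce a finitely generated $I \subseteq \Ker\phi$ with $\sqrt I = \sqrt{\Ker\phi}$, and passing to $\bar B = B/I$ with $N = \Ker\phi/I$, does reduce the conjecture to finite generation of the nil $\FI$-ideal $N$. And your telescoping scheme is sound: if $N^{n+1}=0$ uniformly and $N/N^2$ is a finitely generated $\bar B$-module, then $N$ is generated by any lift of generators of $N/N^2$, by the usual iteration $N=(g_1,\dots,g_m)+N^2=\cdots=(g_1,\dots,g_m)+N^{n+1}$. You correctly identify step (i), bounded nilpotency exponent across all $S$, as the central obstruction, and this is exactly the gap between topological and scheme-theoretic Noetherianity that the paper flags. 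Two further remarks that would sharpen your discussion: first, step (ii) also has a real gap, independent of the flatness issue you raise---$\phi(B)\cong \bar B/N$ is a \emph{subalgebra} of the Noetherian $\FI$-algebra $A$, and subalgebras of Noetherian $\FI$-algebras need not be Noetherian, so even the finite generation of $N/N^2$ as a $\bar B$-module is not automatic. Second, the paper points to relevant context you might incorporate: the conjecture is known when the generators of $B$ map to monomials (\cite{DEKL}), and the image closure can fail scheme-theoretic Noetherianity in characteristic $2$ (\cite[Theorem 43]{Draisma23a}), which, while not a direct counterexample to the conjecture, warns that the nilpotent part of $\Ker\phi$ can be genuinely delicate and that any successful approach must avoid implicitly proving the stronger (false) scheme-theoretic statement.
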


A particular case of the above conjecture was stated in \cite[][Conjecture 5.10]{aschenbrenner-hillar:finitesymmetric} and proved in \cite{DEKL} when generators of the $\FI$-algebra $B$ are mapped under $\phi$ to monomials in $A$.

On the other hand, in the setting of the Main Theorem, there are
examples where $\closure{\Im(\phi^*)}$ is not scheme-theoretically
Noetherian, i.e., has an infinite strictly descending chain of closed
(but non-reduced) $\FIop$-subschemes: in \cite[Theorem 43]{Draisma23a}
it is proved that the cone over the Grassmannian of $2 \times S$-spaces,
with $S \in \FI$, has this bad behaviour in characteristic $2$
(but not in characteristic zero due to the existence of
Reynolds operators, see \cite{draisma:kfactor}).

\subsection{Relations to existing literature}

Motivation for this work comes from finiteness questions about chains of
$\Sym$-invariant ideals that arise in commutative algebra and adjacent
areas. These questions concern the behaviour of homological invariants
\cite{Betti_numbers_of_symmetric_shifted_ideals,Le20,Le21,vanle2022regularity},
bivariate Hilbert series recording both degree and
number of variables \cite{Krone17,NAGEL2017204},
and irreducible components of $\FIop$-varieties
\cite{Nagpal20,Nagpal21,draisma-eggermont-farooq:components}.  Some of
these developments were inspired by challenging questions in algebraic statistics
\cite{aschenbrenner-hillar:finitesymmetric,Drton_2006,draisma:kfactor,
Brouwer11}, and similar techniques have been applied in
discrete geometry \cite{Kahle22} and real-algebraic geometry
\cite{Kummer22}. The most efficient framework, in our opinion, to
study such questions is that of $\FI$-algebras and $\FIop$-schemes,
\cite{Nagel-Romer,draisma-eggermont-farooq:components}.

To illustrate the strength of our Main Theorem, we explain
how it immediately implies the main result in
\cite{draisma:kfactor}.

\begin{ex}
In \cite{Drton_2006} the question was raised whether the ideal of
polynomial relations among the entries of a matrix of the form $\Sigma=A
A^T + D$, with $A$ running through $\RR^{n \times k}$ and
$D$ running through the (positive-definite)
diagonal matrices, is generated by a finite number of $\Sym([n])$-orbits
for $n \to \infty$. The set of all such matrices is the {\em Gaussian
$k$-factor model}. This question remains open to this date. However,
in \cite{Brouwer11} an affirmative answer was given for $k=2$, and in
\cite{draisma:kfactor} an affirmative answer for a set-theoretical version
was established for arbitrary $k$. The latter also follows from the
Main Theorem, because the map that sends the matrix entry $\sigma_{ij}$
to the entry $\sum_{l=1}^k a_{il}a_{jl} + \delta_{ij} \cdot d_{ii}$ is
a homomorphism from the $\FI$-algebra $S \mapsto \RR[\sigma_{ij} \mid
i,j \in S]$ to the $\FI$-algebra $S \mapsto \RR[a_{il},d_{ii} \mid l \in
[k], i \in S]$, and the latter is generated in width $\leq
1$.
\end{ex}

In the example, as in the rest of the paper, $[k]$ stands for
$\{1,\ldots,k\}$, and in particular $[0]=\emptyset$.

\subsection{Organisation of this paper}

In Section 2 we recall elementary properties of finitely generated $\FI$-algebras. Section 3 contains basic results about topological Noetherianity. Section 4 is devoted to the reduction of the Main Theorem \ref{thm:main theorem} to the case of free $\FI$-algebras. In Section 5 we prove our Main Theorem. To this end we prove that image of the map $\phi^*$ lies in the closed subscheme of $\Spec(B)$ defined by off-diagonal $(l+1)\times (l+1)$-subdeterminants of flattnings, see Lemma \ref{lem:containment_in_a_subdeterminant_variety}. This subscheme is defined by finitely many equations (Lemma \ref{lem:Z_l is f.gen.}) and is contained in the image of a topologically Noetherian space, see Proposition \ref{prop:induction} and Lemma \ref{lem:Z}.

A result of potentially independent interest is the following tensor
completion result: a tensor labelled by the tuples in $S \times \cdots \times
S$ in which no entry appears more than once (an {\em off-diagonal tensor})
and of which all the meaningful $(l+1) \times (l+1)$-subdeterminants of
flattenings are zero, can be completed to a full tensor (including the
diagonal) of some bounded rank; see \ref{prop:induction}.

\section{Preliminaries}

In this section we recall definitions and fundamental results about $\FI$-algebras and $\FIop$-schemes. For more details about them see \cite{Nagel-Romer,draisma-eggermont-farooq:components}.

\subsection{$\FI$-algebras and $\FIop$-schemes} \label{ssec:FISchemes}

The following notion was introduced in
\cite{church-ellenberg-farb:FImodules}.

\begin{de}
The category $\FI$ (for {\em Finite sets with Injections})  has as objects finite sets and for finite sets $S$ and $T$
the set $\Hom_{\FI}(S,T)$ is the set of injections from $S$ to $T$. $\FIop$ denotes its opposite category.
\end{de}

Let $K$ be a commutative ring with $1$. All $K$-algebras $A$ are
required to be commutative, have a $1$, and the homomorphism $K \to A$
is required to send $1$ to $1$. Homomorphisms of $K$-algebras are
unital ring homomorphisms $A \to B$ compatible with the homomorphisms from $K$
into them.

\begin{de}
An {\em $\FI$-algebra} $A$ over $K$ is a covariant functor from $\FI$ to the
category of algebras over $K$. Dually, $A$ gives rise to a contravariant
functor $X$ from $\FI$ to the category of affine schemes over $K$.
We call such a functor an {\em $\FIop$-scheme over $K$}.
\end{de}

A morphism $A \to B$ of $\FI$-algebras over $K$ is a natural
transformation of functors from $A$ to $B$: it consists of a $K$-algebra
homomorphism $\phi_S:A(S) \to B(S)$ for all finite sets $S$ such that for
all $S,T \in \FI$ and for each injection $\pi$ from $S$ to $T$, the following diagram commutes.
\[
\xymatrix{
A(S) \ar[r]^{\phi_S} \ar[d]_{A(\pi)} & B(S) \ar[d]^{B(\pi)} \\
A(T) \ar[r]_{\phi_T} & B(T).
}
\]
Morphisms of $\FIop$-schemes are defined dually. We often leave out the
subscript $S$ in $\phi_S$, and often just write $\pi$ instead of $A(\phi)$
when $A$ is clear from the context.

\subsection{$\FI$-algebras finitely generated in width $\leq d$}

$\FI$-algebras that are finitely generated in width at most $d$ are the main characters of this paper. By a subalgebra $B$ of an $\FI$-algebra $A$ over $K$ we mean an $\FI$-algebra $B$ over $K$ such that for each finite set $S$, $B(S)$ is a subalgebra of $A(S)$, and for each injection $\sigma : S \rightarrow T $, $B(\sigma)$ is the restriction of $A(\sigma)$ to $B(S)$.

\begin{de}
An $\FI$-algebra $A$ over $K$ is said to be {\em finitely
generated} if there is a finite set $D \subset \bigsqcup _{S\in \FI} A(S)$ which is not contained in any proper subalgebra of $A$. If such $D$ exists, we say that $A$ is generated by $D$.
\end{de}

\begin{de}
Let $A$ be an $\FI$-algebra over $K$. For  $S \in \FI$ and
$f \in A(S)$, we call the minimal $n$ such that $f$ lies in
$A(\pi) A([n])$ for some injective map $\pi: [n]\rightarrow
S$, the {\em width} of $f$, denoted $w(f)$.
\end{de}

\begin{de}
An $\FI$-algebra $A$ over $K$ is  {\em generated in width
$\leq d$} if there exists a collection $(f_i \in A(S_i))_{i \in I}$ of elements of width $\leq d$ that generates $A$.
\end{de}

\begin{de}
An $\FI$-algebras $A$ over $K$ is said to be finitely generated in width $\leq
d$ if $A$ is finitely generated and generated in width $\leq d$.
\end{de}

\begin{re}
It is straightforward to see that this is equivalent to the condition that
$A$ is generated by a finite collection of elements $f_i \in A(S_i)$
of width $\leq d$.
\end{re}

The following $\FI$-algebra $B_d$ is a building block for $\FI$-algebras that are finitely generated in width $\leq d$.
\begin{defi}
For a non-negative integer $d$, denote by $B_d$ the $\FI$-algebra over a ring $K$ that maps a finite set $S$ to a $K$-algebra
\begin{align*}
    B_d(S)=K[y_{i_1,i_2,...,i_d}:i_1,i_2,...,i_d \in S\,\, \text{and}\,\, i_j \neq i_l \,\, \text{when}\,\, j \neq l ]
\end{align*}
and  for an injection $\sigma : S \rightarrow T$, the
$K$-algebra homomorphism $B_d(\sigma)$ is determined by
$B_d(\sigma)(y_{i_1,i_2,...,i_d}):=
y_{\sigma(i_1),\sigma(i_2),...,\sigma(i_d)}$. It is easy to
see that $B_d$ is generated by the single element
$y_{1,2,...,d} \in B_d([d])$, of width $d$.
\end{defi}

\begin{re}\label{re:B[K_0,...,k_e]}
\begin{enumerate}
    \item We think of the elements of $\Spec B_d$ as $S\times S \times ... \times S$-tensors of which only the entries outside the big diagonal $\{ (i_1,i_2,...,i_d) : \exists \,\, j < l, i_j=i_l \}$ are given. We call these tensors ``off-diagonal tensors".
    \item It is easy to show that the tensor product of such algebras is again finitely generated in width at most the largest of the relevant $d$.
    \item For fixed non-negative integers $d, k_0,
    k_1,k_2,...,k_d$, $B[k_0, k_1, k_2,..., k_d]$ denotes
    the $\FI$-algebra $\bigotimes_{i=0}^d B_{i}^{\otimes
    k_i}$, which is finitely generated in width at most $d$.
    \qedhere
\end{enumerate}
\end{re}

\begin{de}
An $\FIop$-scheme of finite type over $K$ is the spectrum of a finitely generated $\FI$-algebra over $K$.
\end{de}

\begin{de}
A width-$d$ $\FIop$-scheme over $K$ is the spectrum of an $\FI$-algebra over $K$ that is generated in width $\leq d$.
\end{de}

\begin{de}
A width-$d$ $\FIop$-scheme of finite type over $K$ is the
spectrum of an $\FI$-algebra over $K$ that is finitely
generated in width $\leq d$.
\end{de}

We record the following lemmas for the later use.

\begin{lem}\label{lem:homomorphism}
Let $A$ be an $\FI$-algebra. Then any homomorphism $\phi :B_d \rightarrow A$ of $\FI$-algebras is completely determined by the image of $y_{1,2,...,d}\in B_d([d])$. Moreover, $B_d$ has the following universal property: if $a \in A([d])$ is any element of $A$, then there exists a unique homomorphism $\phi: B_d \rightarrow A$ such that $\phi(y_{1,2,...,d})=a$.
\end{lem}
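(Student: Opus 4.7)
The plan is to exploit the fact that $B_d$ is, by its very construction, a \emph{free} $\FI$-algebra on one generator of width $d$, and that each $B_d(S)$ is a polynomial ring over $K$ whose variables arise from $y_{1,\dots,d}$ via the $\FI$-structure. The proof then splits cleanly into a uniqueness half and an existence half, and each reduces to checking something on the single generator $y_{1,\dots,d}$.

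For uniqueness, I would fix a finite set $S$ and a tuple $(i_1,\dots,i_d)$ of pairwise distinct elements of $S$, and let $\pi\colon [d]\to S$ be the injection $j\mapsto i_j$. By construction of $B_d$ one has $B_d(\pi)(y_{1,\dots,d})=y_{i_1,\dots,i_d}$, so naturality of $\phi$ forces
\[
\phi_S(y_{i_1,\dots,i_d})\;=\;\phi_S\bigl(B_d(\pi)(y_{1,\dots,d})\bigr)\;=\;A(\pi)\bigl(\phi_{[d]}(y_{1,\dots,d})\bigr)\;=\;A(\pi)(a),
\]
where $a:=\phi_{[d]}(y_{1,\dots,d})$. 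Since the $y_{i_1,\dots,i_d}$ (with pairwise distinct indices from $S$) are precisely the polynomial generators of $B_d(S)$ and $\phi_S$ is a $K$-algebra homomorphism, its values on all of $B_d(S)$ are now forced.

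For existence, given any $a\in A([d])$ I would use the same formula as a \emph{definition}: on generators set
\[
\phi_S(y_{i_1,\dots,i_d}):=A(\pi)(a),
\]
with $\pi$ as above, and extend uniquely to a $K$-algebra homomorphism $\phi_S\colon B_d(S)\to A(S)$; this is legitimate because $B_d(S)$ is free on these generators. It remains to verify naturality: for an injection $\sigma\colon S\to T$ and a generator $y_{i_1,\dots,i_d}\in B_d(S)$, one computes on the one hand
\[
\phi_T\bigl(B_d(\sigma)(y_{i_1,\dots,i_d})\bigr)=\phi_T(y_{\sigma(i_1),\dots,\sigma(i_d)})=A(\sigma\circ\pi)(a),
\]
and on the other hand
\[
A(\sigma)\bigl(\phi_S(y_{i_1,\dots,i_d})\bigr)=A(\sigma)(A(\pi)(a))=A(\sigma\circ\pi)(a),
\]
where the last equality uses functoriality of $A$. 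The two sides agree on generators, hence on all of $B_d(S)$, so the square commutes and $\phi$ is a morphism of $\FI$-algebras with $\phi_{[d]}(y_{1,\dots,d})=A(\mathrm{id}_{[d]})(a)=a$.

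There is really no serious obstacle here; the only thing one has to be a little careful about is making sure the formula for $\phi_S$ on a generator $y_{i_1,\dots,i_d}$ does not depend on any hidden choice (it does not, since the injection $\pi\colon[d]\to S$ with $\pi(j)=i_j$ is uniquely determined by the tuple), and that $B_d(S)$ really is the polynomial ring on these symbols so that defining $\phi_S$ on them extends uniquely to a $K$-algebra homomorphism. Once those points are in place, functoriality of $A$ immediately gives naturality.
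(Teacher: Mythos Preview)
Your proof is correct and follows essentially the same approach as the paper's: both the uniqueness and existence arguments hinge on writing each generator $y_{i_1,\dots,i_d}$ as $B_d(\pi)(y_{1,\dots,d})$ for the injection $\pi\colon j\mapsto i_j$ and then invoking naturality (respectively functoriality of $A$). The paper merely says that ``a straightforward computation'' verifies naturality of the constructed $\phi$, whereas you spell out that computation explicitly; otherwise the two proofs coincide.
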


\begin{proof}
Let $y_{i_1,i_2,...,i_d}$ be a variable in $B_d(S)$ where
$i_1,i_2,...,i_d \in S$. Let $\sigma :[d] \rightarrow S$ be
the injection that sends $j$ to $i_j$. Then  we have
\[ \phi(y_{i_1,i_2,...,i_d})= \phi (B_d(\sigma)
y_{1,2,...,d})= A(\sigma) \phi(y_{1,2,...,d}). \]
Thus the image of $y_{1,2,...,d}$ completely determines the map $\phi$.

To prove the last statement, for $i_1,\ldots,i_d \in S$ distinct,
define $\phi(y_{i_1,i_2,...,i_d}):= A(\sigma)a$, where $\sigma:[d] \to S$
is the map that sends $j$ to $i_j$, and extend this to a
homomorphism $B_d(S) \to A(S)$ in the unique manner. A
straightforward computation shows that this does, indeed,
define an $\FI$-algebra homomorphism $B \to A$ with
$\phi(y_{1,\ldots,d})=a$.
\end{proof}

\begin{re}\label{re:universality}
Lemma \ref{lem:homomorphism} implies that the $\FI$-algebra $B[k_0, k_1,k_2,...,k_d]$ has the same universal property when $a$ is replaced by a tuple in $\prod_{e=0}^{d} A([e])^{k_e}$.
\end{re}

\begin{lem}\label{lem:surjectivity}
Let $A$ be an $\FI$-algebra that is finitely generated in width at most $d$. Then there exists a surjective homomorphism $\phi : B[k_0, k_1,k_2,...,k_d] \rightarrow A$ for some non-negative integers $k_0, k_1,k_2,...,k_d$.
\begin{proof}
Let $k_i$ be the number of generators of width $i$. Assume
$h_1^{(i)},h_2^{(i)},...,h_{k_i}^{(i)} \in A([i])$ are
generators of width $i$. Consider the unique $\FI$-algebra homomorphism $\phi_i : B_{i}^{\otimes k_i} \rightarrow A$ that sends
$y_{1,2,...,i}^{(l)} \in B_i([i])^{\otimes k_i}$ to
$h_l^{(i)}$ for $l =1,2,...,k_i$. Then $\phi$ is surjective
by construction. Then the tensor map $\bigotimes_{i=0}^d: B[k_0, k_1,k_2,...,k_d] \rightarrow A$ is the required surjective morphism.
\end{proof}
\end{lem}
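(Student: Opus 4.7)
The plan is to unwind the definitions and then apply the universal property spelled out in Lemma~\ref{lem:homomorphism} and Remark~\ref{re:universality}.

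First I would use the definition of "finitely generated in width $\leq d$" (and the remark following it) to produce a finite list of generators with nicely-bounded domains. By hypothesis $A$ has a finite generating set; moreover each generator has width $\leq d$, meaning it equals $A(\pi)(g)$ for some $g\in A([e])$ with $e\leq d$ and some injection $\pi$. I would therefore replace each generator by a suitable $g\in A([e])$, and group these by their width: let $k_e$ be the number of chosen width-$e$ generators, and call them $h_1^{(e)},\ldots,h_{k_e}^{(e)}\in A([e])$ for $e=0,1,\ldots,d$. The sub-$\FI$-algebra of $A$ generated by this new collection still contains all the original generators (applying the appropriate $A(\pi)$) and hence is all of $A$.

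Next I would appeal to Remark~\ref{re:universality}, which extends Lemma~\ref{lem:homomorphism} from a single $B_d$ to arbitrary tensor products $B[k_0,\ldots,k_d]=\bigotimes_{e=0}^d B_e^{\otimes k_e}$. The tuple $(h_l^{(e)})_{0\leq e\leq d,\,1\leq l\leq k_e}\in \prod_{e=0}^d A([e])^{k_e}$ then determines a unique $\FI$-algebra homomorphism
\[
\phi : B[k_0,k_1,\ldots,k_d]\longrightarrow A
\]
sending each distinguished generator $y^{(l)}_{1,\ldots,e}$ of the $l$-th copy of $B_e$ to $h_l^{(e)}$.

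Finally, surjectivity is automatic: the image of $\phi$ is an $\FI$-subalgebra of $A$ containing every $h_l^{(e)}$, hence also every $A(\pi)(h_l^{(e)})$, hence the entire original generating set of $A$, hence all of $A$. The only genuinely non-trivial ingredient is the universal property itself, which is already in hand; the remaining work is the bookkeeping step of moving from a "generating set of width $\leq d$" to a generating set lying in the particular algebras $A([0]),\ldots,A([d])$, and this is exactly what the width definition provides. No serious obstacle is expected.
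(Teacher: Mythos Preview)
Your proposal is correct and follows essentially the same route as the paper: pick width-$e$ generators $h_l^{(e)}\in A([e])$ for $e\leq d$, invoke the universal property of $B[k_0,\ldots,k_d]$ (Lemma~\ref{lem:homomorphism} / Remark~\ref{re:universality}) to produce $\phi$, and observe that its image contains the generating set. The only cosmetic difference is that the paper first builds the individual maps $\phi_i:B_i^{\otimes k_i}\to A$ and then tensors them, whereas you go straight to the tensor product; these are the same construction, and your explicit justification of why generators can be taken in $A([e])$ and why the image is all of $A$ is, if anything, a little more careful than the paper's ``surjective by construction''.
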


\begin{re}
Dually, $\Spec(A)$ embeds in a finite product of affine spaces of off-diagonal tensors.
\end{re}

\subsection{Shifting}
The idea of shifing an $\FI$-structure over a finite set goes back to
\cite{church-ellenberg-farb:FImodules}. It has also been used effectively
in \cite{draisma-eggermont-farooq:components}.

\begin{de}
Let $T_0$ be a finite set. Then $\Sh_{T_0}:\FI \to \FI$ is the functor that sends a finite set  $T$ to the disjoint union $T_0 \sqcup T$ and an injection $\sigma : T \rightarrow S$ to $\Sh_{T_0}
\sigma:T_0 \sqcup T \to T_0 \sqcup S$ that is the identity on $T_0$ and equal to $\sigma$ on $T$. For an $\FI$-algebra $A$ over $K$ we write $\Sh_{T_0}
A:=A \circ \Sh_{T_0}$ and for the $\FIop$-scheme $X=\Spec(A)$ over $K$ we write $\Sh_{T_0} X:=X \circ \Sh_{T_0} = \Spec(\Sh_{T_0}
A)$.
\end{de}

\begin{re}
If $A$ is finitely generated in width at most $d$, then so is $\Sh_{T_0}
A$; and hence, if $X$ is a width $\leq d$ $\FIop$-scheme of finite type over $K$, then so is $\Sh_{T_0} X$.

Furthermore, for a homomorphism $\phi:A \to B$ of $\FI$-algebras
over $K$, we write $\Sh_{T_0} \phi$ for the morphism $\Sh_{T_0} A \to
\Sh_{T_0} B$ that sends $T$ to $\phi(T_0 \sqcup T)$, and similarly for morphisms of $\FIop$-schemes. A straightforward check shows that $\Sh_{T_0}$ is a covariant functor from $\FI$-algebras over $K$
into itself and from $\FIop$-schemes over $K$ into itself.
\end{re}

\begin{lem}
For a fixed finite set $T_0$, $\Sh_{T_0}B[k_0, k_1, k_2,..., k_d]$ is isomorphic to $ B[k'_0, k'_1, k'_2,..., k'_{d-1},k_d]$ for some non-negative integers $k_0', k_1',...,k_{d-1}'$.
\begin{proof}
We have $\Sh_{T_0}(A \otimes C) \cong (\Sh_{T_0}A)\otimes (\Sh_{T_0}C)$, for any $\FI$-algebras $A$ and $C$. So it suffices to show that $B_d$ is isomorphic to $B[k_0',k_1',...,k_{d-1}',1]$. Note that
\begin{align*}
    \Sh_{T_0} B_d (T)=K[y_{i_1,i_2,...,i_d}:i_1,i_2,...,i_d
    \in T_0 \sqcup T\,\, \text{and}\,\, i_l \neq i_m \,\,
    \text{when}\,\, l \neq m ]\\ \cong
\left(\bigotimes_{e=0}^{d-1} B_e(T)^{\otimes \binom{d}{e}
\times |T_0| \times (|T_0|-1)\times ... \times
(|T_0|-d+e+1)} \right) \otimes (B_d(T) ),
\end{align*}
where the exponent counts the number of ways of filling $d-e$
of the $d$ index positions with elements of $T_0$.
Thus $\Sh_{T_0}B_d \cong B[k'_0, k'_1, k'_2,..., k'_{d-1},1]$.
\end{proof}
\end{lem}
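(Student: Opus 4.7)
The plan is to first reduce to the case where the algebra being shifted is a single $B_d$, and then to analyse $\Sh_{T_0} B_d$ directly by classifying its polynomial generators according to how many of their indices lie in $T_0$ versus in $T$.

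For the reduction step, I would first verify that shifting commutes with tensor products of $\FI$-algebras, i.e.\ $\Sh_{T_0}(A \otimes C) \cong (\Sh_{T_0} A) \otimes (\Sh_{T_0} C)$ naturally in $T$. This is immediate from the pointwise definition of the tensor product of $\FI$-algebras and the fact that $(\Sh_{T_0} A)(T) = A(T_0 \sqcup T)$. By definition $B[k_0,\ldots,k_d] = \bigotimes_{i=0}^d B_i^{\otimes k_i}$, so it suffices to exhibit, for each $d$, an isomorphism of $\FI$-algebras $\Sh_{T_0} B_d \cong B[\ell_0, \ell_1, \ldots, \ell_{d-1}, 1]$ for suitable non-negative integers $\ell_e$. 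Reassembling these pieces via the first step then yields the result, with the exponent of $B_d$ in the final answer being exactly $k_d$ (since only the $B_d$-factors contribute a $B_d$ after shifting).

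For the main step, recall that $\Sh_{T_0} B_d(T) = B_d(T_0 \sqcup T)$ is the polynomial ring on variables $y_{i_1,\ldots,i_d}$ with pairwise distinct $i_1,\ldots,i_d \in T_0 \sqcup T$. I would partition these variables according to the subset $J \subseteq [d]$ of index-positions occupied by elements of $T_0$ and the specific injection $J \hookrightarrow T_0$ that records which $T_0$-elements appear there. Fixing such data, the remaining $d - |J|$ positions must be filled by distinct elements of $T$, and as $T$ varies the resulting variables form precisely a copy of $B_{d-|J|}(T)$. Letting $e := d - |J|$, the total number of such copies equals $\binom{d}{e} \cdot |T_0|(|T_0|-1)\cdots(|T_0| - (d-e) + 1)$, namely $\binom{d}{e}$ choices of $J$ times the number of injections $J \hookrightarrow T_0$. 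Taking the tensor product over all these copies and over all $e \in \{0,1,\ldots,d\}$ yields the desired isomorphism, with a single factor of $B_d$ coming from $e = d$ (i.e.\ $J = \emptyset$).

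The one place requiring care is showing that this decomposition is natural in $T$, so that it really is an isomorphism of $\FI$-algebras and not merely a pointwise isomorphism. I would check this by noting that each generator $y_{i_1,\ldots,i_d}$ with the $T_0$-positions fixed depends on its $T$-positions via the identity-on-$T$ part of $\Sh_{T_0} \sigma = \mathrm{id}_{T_0} \sqcup \sigma$, so an injection $\sigma: T \to T'$ permutes the generators within each fixed-$(J, J \hookrightarrow T_0)$-block in exactly the way prescribed by $B_{d-|J|}(\sigma)$. This makes the decomposition a natural isomorphism of functors $\FI \to K\text{-}\mathbf{Alg}$. The one mild subtlety, which I expect to be the only nontrivial point, is keeping the combinatorial bookkeeping of $\ell_e$ consistent when reassembling the tensor factors from the individual $B_d$'s of $B[k_0,\ldots,k_d]$; but this is purely arithmetic.
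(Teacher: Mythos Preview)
Your proposal is correct and follows essentially the same approach as the paper: reduce to a single $B_d$ via $\Sh_{T_0}(A\otimes C)\cong(\Sh_{T_0}A)\otimes(\Sh_{T_0}C)$, then partition the generators of $\Sh_{T_0}B_d(T)$ by which index positions land in $T_0$, obtaining the same multiplicity $\binom{d}{e}\cdot|T_0|(|T_0|-1)\cdots(|T_0|-d+e+1)$ for each $B_e$-factor. Your explicit verification of naturality in $T$ is a welcome addition that the paper leaves implicit.
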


\section{Noetherianity} \label{sec:Noetherianity}

\subsection{Noetherianity for $\FI$-algebras and
$\FIop$-schemes}

\begin{de}
Given an $\FI$-algebra $A$ over $K$, we define its direct limit
$\overrightarrow{A}= \underrightarrow{\lim}\,A([n])$
using the direct system $(A([n]), A(i_{m,n}))$, where $m \leq n$ and
$i_{m,n}: [m] \rightarrow [n]$ is the natural inclusion map.\\
Dually, given an $\FIop$-scheme $X$ over $K$, we define its  inverse limit
$\overleftarrow{X}= \underleftarrow{\lim} \,X([n])$ using the inverse system
$(X([n]), X(i_{m,n}))$, where $m \leq n$ and $i_{m,n}: [m] \rightarrow [n]$ is the natural inclusion map.
\end{de}

If $A$ be an $\FI$-algebra, then there is an induced action
of the group $\Sym(\infty)= \bigcup_{n\geq 1} \Sym(n)$, where $\Sym(n)$ denotes the symmetric group on $n$ letters and is naturally embedded into $\Sym(n+1)$ as the stabilizer of $\{n+1\}$, on the direct limit $\overrightarrow{A}$.
An $\FI$-algebra is called Noetherian if it satisfies the ascending chain condition on $\FI$-ideals. It is well known that if an $\FI$-algebra $A$ is Noetherian then its direct limit $\overrightarrow{A}$ is $\Sym(\infty)$-Noetherian, i.e.,it satisfies the ascending chain condition on $\Sym(\infty)$ stable ideals.\\
Duall, an $\FIop$-scheme $X$ is called Noetherian if it
satisfies the descending chain condition on closed
$\FI$-subschemes, and {\em topologically Noetherian} if it
satisfies the descending chain condition on closed and {\em reduced}
$\FI$-subschemes. Again the inverse limit
$\overleftarrow{X}$ of a topologically Noetherian
$\FIop$-scheme $X$ is topologically
$\Sym(\infty)$-Noetherian i.e.~it satisfies the descending
chain condition on $\Sym(\infty)$ stable closed reduced
subschemes. To save words, from now on, we will just use the word ``closed subset'' to mean
``reduced subscheme''.

\begin{ex}
The direct limit of the $\FI$-algebra $B_1^{\otimes k}$ is (isomorphic with) the polynomial ring $K[x_{i,j}: i \in [k], j \in \NN ]$ with the following induced action: $\sigma(x_{i,j})=x_{i,\sigma(j)}$ where $\sigma \in \Sym(\infty)$.\\
The inverse limit of the $\FIop$-scheme $\Spec B_1^{\otimes k}$ is the space of $k \times \NN$-matrices where $\Sym(\infty)$ acts by permuting columns of such matrices.
\end{ex}

\begin{thm}\cite{aschenbrenner-hillar:finitesymmetric, cohen:metabelian, cohen:closure, hillar-sullivant:GBinfdim} \label{thm:Noetherianity}
Every $\FI$-algebra $A$ over a Noetherian ring $K$ that is finitely generated in width $\leq 1$ is Noetherian, i.e., if $I_1 \subseteq I_2 \subseteq \cdots$ is an ascending chain of ideals in $A$, then $I_n=I_{n+1}$ for all $n \gg 0$.
\end{thm}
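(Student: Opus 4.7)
The plan is to sketch a proof of this classical result via Gröbner bases combined with a Higman-type well-quasi-ordering argument.

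First I would reduce to a universal case. By Lemma~\ref{lem:surjectivity}, $A$ is a quotient of some $B[k_0,k_1] = B_0^{\otimes k_0} \otimes B_1^{\otimes k_1}$; and Noetherianity is inherited by quotients in the category of $\FI$-algebras, since $\FI$-ideals of the quotient correspond bijectively to $\FI$-ideals of the covering algebra containing the defining ideal. The factor $B_0^{\otimes k_0}(S) = K[y_1,\ldots,y_{k_0}]$ is independent of $S$, so one may absorb these variables into the base ring; by the ordinary Hilbert basis theorem this new ring is still Noetherian. It therefore suffices to prove the theorem for $B_1^{\otimes k}$ over a Noetherian ring $K$.

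Next I would pass to the direct limit $R_\infty := \underrightarrow{\lim}\, B_1^{\otimes k}([n]) = K[x_{i,j} : i \in [k], j \in \NN]$; the $\FI$-ideals of $B_1^{\otimes k}$ correspond bijectively to $\Sym(\infty)$-stable ideals of $R_\infty$, where $\Sym(\infty)$ acts on the column index $j$. A monomial of $R_\infty$ is encoded by a finite multiset of ``columns'' $\mathbf{c} \in \NN^k$, with $c_i$ recording the exponent of $x_{i,j}$ at some column $j$. Define a quasi-order on such multisets by declaring $M \leq M'$ iff there is an injection $\iota:M \to M'$ with $\mathbf{c} \leq \iota(\mathbf{c})$ coordinatewise for every $\mathbf{c} \in M$; this is precisely $\FI$-divisibility of the corresponding monomials.

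I would then fix a monomial order on $R_\infty$ refining the above quasi-order, so that for any ascending chain of $\FI$-ideals $I_1 \subseteq I_2 \subseteq \cdots$ the chain of initial ideals $\LM(I_1) \subseteq \LM(I_2) \subseteq \cdots$ remains $\FI$-stable. A standard $S$-polynomial / division argument, adapted to allow the morphisms coming from $\FI$, shows that once this chain of monomial ideals stabilizes so does the original chain. It therefore remains to show that any $\FI$-stable set of monomials has finitely many minimal elements up to the $\FI$-action, equivalently that the quasi-order above admits no infinite antichain. Dickson's lemma states that $\NN^k$ is a well-partial-order; Higman's lemma then upgrades this to the statement that finite sequences (hence multisets) over a well-partial-order are well-partial-ordered under ``weakly-coordinate-increasing injective embedding'', which is exactly what is needed.

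The main obstacle is precisely this combinatorial heart of the argument --- the Higman-type well-quasi-ordering on multisets in $\NN^k$. Once that is in place, the Gröbner machinery is routine; one only needs to check the $\FI$-compatibility of leading terms and of $S$-polynomial reductions, and the faithful correspondence between $\FI$-ideals of $B_1^{\otimes k}$ and $\Sym(\infty)$-stable ideals of $R_\infty$.
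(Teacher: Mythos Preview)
The paper does not give its own proof of this theorem; it is quoted from the cited references and used as a black box. Your sketch follows the classical argument in those references (Cohen; Aschenbrenner--Hillar; Hillar--Sullivant): reduce to the free algebra $B_1^{\otimes k}$ via Lemma~\ref{lem:surjectivity}, pass to the colimit $R_\infty = K[x_{i,j}:i\in[k],\,j\in\NN]$, and combine an equivariant Gr\"obner/division argument with the well-quasi-ordering on $\Sym(\infty)$-orbits of monomials furnished by Dickson's lemma together with Higman's lemma. The outline is correct, and since the paper offers no alternative argument there is nothing further to compare.
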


\begin{re}\label{re:SpecNoetherianity}
In fact, $A$ is Noetherian in a stronger sense: any finitely generated $A$-module satisfies the ascending chain condition on submodules \cite[Theorem 6.15]{Nagel-Romer}.\\
It is easy to see that if an $\FI$-algebra $A$ is Noetherian, then the $\FIop$-scheme $X$ defined by $A$ is topologically Noetherian.
\end{re}

\begin{ex}\label{ex:Noetherianity}
The $\FI$-algebra $B_1^{\otimes k}$ over a Noetherian ring $K$ is Noetherian because it is finitely generated in width at most 1. So the polynomial ring $K[x_{i,j}: i \in [k], j \in \NN ]$ is $\Sym(\infty)$-Noetherian.\\
Dually, the $\FIop$-scheme $\Spec B_1^{\otimes k}$ over a Noetherian ring $K$ is topologically Noetherian. Thus its inverse limit, the space of $k \times \NN$-matrices, is $\Sym(\infty)$-Noetherian.
\end{ex}

For the (easy) proofs of the following lemmas we refer to \cite[Section 5]{draisma-kutter:boundedtensors}.

\begin{lem}\label{lem:subspace}
Let $X$ be an $\FIop$-scheme that is topologically Noetherian. Then any closed subset of $X$ is topologically Noetherian with respect to the induced topology.
\end{lem}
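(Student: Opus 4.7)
The plan is to observe that closedness is transitive: a closed $\FI$-subscheme of a closed subset $Y$ of $X$ is automatically a closed $\FI$-subscheme of $X$, so the descending chain condition for $Y$ is inherited for free from the descending chain condition for $X$.

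First I would unpack what a closed subset of $Y$ means. Since $Y$ is itself a (reduced closed) $\FIop$-subscheme of $X$, it is in particular an $\FIop$-scheme on its own, and ``closed subset of $Y$ in the induced topology'' means a subfunctor $Z$ of $Y$ such that for each finite set $S$, $Z(S)$ is a closed reduced subscheme of $Y(S)$, with the $\FI$-functoriality inherited from $Y$. Because closed embeddings compose, $Z(S)$ is then also a closed reduced subscheme of $X(S)$, and naturality with respect to $\FI$-morphisms is preserved; hence $Z$ is a closed subset of $X$ in the sense of Section~\ref{sec:Noetherianity}.

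Next I would take an arbitrary descending chain
\[ Y \supseteq Z_1 \supseteq Z_2 \supseteq \cdots \]
of closed subsets of $Y$ and view it, via the previous paragraph, as a descending chain of closed subsets of $X$. Since $X$ is topologically Noetherian by assumption, this chain must stabilise: there is an $N$ with $Z_n = Z_{n+1}$ (as subfunctors of $X$, hence as subfunctors of $Y$) for all $n \geq N$. This is exactly the descending chain condition on closed subsets of $Y$, proving that $Y$ is topologically Noetherian.

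I do not expect any serious obstacle; the only thing to be careful about is the bookkeeping that a closed $\FI$-subscheme of $Y$ really is a closed $\FI$-subscheme of $X$, which is a formal consequence of the definitions recalled in Section~\ref{ssec:FISchemes} together with the convention, fixed just before the lemma, that ``closed subset'' abbreviates ``reduced closed $\FI$-subscheme''.
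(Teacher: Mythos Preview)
Your argument is correct and is exactly the standard one: closed reduced $\FIop$-subschemes of $Y$ are closed reduced $\FIop$-subschemes of $X$, so any descending chain in $Y$ is already a descending chain in $X$ and therefore stabilises. The paper itself does not spell out a proof but refers to \cite[Section~5]{draisma-kutter:boundedtensors}, where the same elementary reasoning is given, so your proposal matches the intended approach.
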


\begin{lem}\label{lem:union}
Let $X$ and $Y$ be topologically Noetherian $\FIop$-schemes. Then the disjoint union $X \sqcup Y$ defined by $X \sqcup Y (S) = X(S) \sqcup Y(S)$ is topologically Noetherian with respect to the disjoint union topology.
\end{lem}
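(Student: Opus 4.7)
The plan is to take a descending chain of closed reduced $\FIop$-subschemes of $X \sqcup Y$, decompose each one functorially into its $X$-part and $Y$-part, and apply topological Noetherianity of $X$ and $Y$ separately.

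More concretely, let $Z_1 \supseteq Z_2 \supseteq \cdots$ be a descending chain of closed $\FI$-subschemes of $X \sqcup Y$. For every finite set $S$, the disjoint union topology on $X(S) \sqcup Y(S)$ forces $Z_n(S)$ to decompose uniquely as
\[ Z_n(S) = Z_n^X(S) \sqcup Z_n^Y(S), \]
where $Z_n^X(S) := Z_n(S) \cap X(S)$ is closed in $X(S)$ and $Z_n^Y(S) := Z_n(S) \cap Y(S)$ is closed in $Y(S)$. I would first verify that $Z_n^X$ and $Z_n^Y$ are actually $\FI$-subfunctors of $X$ and $Y$. This is the only point that needs a sanity check: for an injection $\sigma : S \to T$, the morphism $(X \sqcup Y)(\sigma)$ is by definition $X(\sigma) \sqcup Y(\sigma)$, which sends $X(T)$ into $X(S)$ and $Y(T)$ into $Y(S)$; hence it preserves the decomposition, and the inclusion $Z_n \subseteq X \sqcup Y$ being a morphism of $\FIop$-schemes then ensures that $Z_n^X$ and $Z_n^Y$ are closed $\FI$-subschemes of $X$ and $Y$, respectively.

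Having done this, I obtain two descending chains of closed reduced $\FI$-subschemes
\[ Z_1^X \supseteq Z_2^X \supseteq \cdots \quad \text{in } X, \qquad Z_1^Y \supseteq Z_2^Y \supseteq \cdots \quad \text{in } Y. \]
By topological Noetherianity of $X$ and $Y$, there exist integers $N_X$ and $N_Y$ such that $Z_n^X = Z_{N_X}^X$ for all $n \geq N_X$ and $Z_n^Y = Z_{N_Y}^Y$ for all $n \geq N_Y$. Setting $N := \max(N_X, N_Y)$ and reassembling, $Z_n(S) = Z_n^X(S) \sqcup Z_n^Y(S) = Z_N^X(S) \sqcup Z_N^Y(S) = Z_N(S)$ for every $n \geq N$ and every $S$, so the chain stabilises. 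No real obstacle is expected here; the proof is essentially a bookkeeping exercise, and the one thing worth writing down carefully is the functoriality of the decomposition $Z_n \mapsto (Z_n^X, Z_n^Y)$.
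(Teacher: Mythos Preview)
Your argument is correct and is exactly the standard one; the paper itself does not spell out a proof but simply refers to \cite[Section~5]{draisma-kutter:boundedtensors} for this and the neighbouring lemmas, where the same decomposition-into-pieces argument is used. There is nothing to add.
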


\begin{lem}\label{lem:image}
Let $\psi$ be a homomorphism of $\FIop$-schemes from $X$ to $Y$ and $X$ is topologically Noetherian. Then $\Im(\psi)$ is topologically Noetherian with respect to the topology induced from $Y$.
\end{lem}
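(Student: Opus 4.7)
The plan is to adapt the standard argument that a continuous image of a Noetherian topological space is Noetherian to the functorial $\FIop$-setting. The central observation is that pulling back a closed $\FI$-subset of $Y$ along $\psi$ yields a closed $\FI$-subset of $X$, and that taking the $\psi$-image of a $\psi$-preimage recovers exactly the trace of that subset on $\Im(\psi)$.

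Concretely, I would start with a descending chain $C_1 \supseteq C_2 \supseteq \ldots$ of closed subsets of $\Im(\psi)$ in the induced topology. By definition of that topology, each $C_i$ has the form $\Im(\psi) \cap Z_i$ for some closed $\FI$-subset $Z_i$ of $Y$; one may, for instance, take $Z_i$ to be the level-wise closure of $C_i$ in $Y$, which is automatically an $\FI$-subfunctor by continuity and naturality of the transition maps $Y(\pi)$. Replacing $Z_i$ by $Z_1 \cap \cdots \cap Z_i$ I may assume the $Z_i$ themselves form a descending chain of closed $\FI$-subsets of $Y$.

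Next, I would pull back along $\psi$: set $W_i := \psi^{-1}(Z_i)$, meaning $W_i(S) := \psi_S^{-1}(Z_i(S))$ for each finite set $S$. At each level these are closed reduced subschemes of $X(S)$, and naturality of $\psi$ combined with the $\FI$-structure on $Z_i$ ensures that the $W_i$ are closed $\FI$-subsets of $X$ forming a descending chain. Topological Noetherianity of $X$ then yields an $N$ with $W_n = W_{n+1}$ for all $n \geq N$. Applying $\psi$ level-wise and using the elementary set-theoretic identity $f(f^{-1}(Z)) = Z \cap f(A)$, one obtains $\psi_S(W_i(S)) = Z_i(S) \cap \Im(\psi)(S) = C_i(S)$ for every $S$, so $W_n = W_{n+1}$ forces $C_n = C_{n+1}$ for all $n \geq N$. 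There is essentially no genuine obstacle here; the only routine points requiring care are that level-wise closure in $Y$ defines a closed $\FI$-subfunctor, and that $\psi^{-1}$ of a closed $\FI$-subfunctor is again a closed $\FI$-subfunctor, both of which follow immediately from continuity of scheme morphisms and naturality of $\psi$.
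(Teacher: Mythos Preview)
Your argument is correct and is exactly the standard proof; the paper itself does not give a proof but simply refers to \cite[Section 5]{draisma-kutter:boundedtensors}, where the same pull-back-and-stabilise argument appears. There is nothing to add: the only points requiring verification are precisely the two you flag (that level-wise closure and $\psi^{-1}$ of closed $\FI$-subfunctors are again closed $\FI$-subfunctors), and both are immediate from continuity and naturality as you say.
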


\section{Reduction to the Case of Free FI-algebras}\label{sec:free}

This section is devoted to the reduction of our main theorem to the case of free $\FI$-algebras.

\subsection{Making $B$ free}
Here we introduce the following useful notations. Let $A$ be
an $\FI$-algebra. By $f\in A$ we mean that there exists $S
\in \FI$ such that $f \in A(S)$. Similarly, by $p \in \Spec
A $ we mean that there exists $T \in \FI$ such that $p \in
\Spec A(T)$. By $f(p)=0 $ we mean that for all $\sigma: S
\rightarrow T$ injections, $(\sigma f)(p)=0$.

\begin{prop}\label{prop:reductionB}
Let $\pi : C \rightarrow B$ and $\phi :B \rightarrow A$ be homomorphisms of $FI$-algebras, where $\pi$ is surjective. If the image closure $\closure {\Im (\pi^* \circ \phi^*)}$ of $\pi^* \circ \phi^*: \Spec A \rightarrow \Spec C$ is defined by finitely many elements in $C$, then the image closure $\closure{\Im(\phi^*)}$ of $\phi^*: \Spec A \rightarrow \Spec B$ is also defined by finitely many elements in $B$. If, moreover, $\closure {\Im (\pi^* \circ \phi^*)}$ is topologically Noetherian, then $\closure{\Im(\phi^*)}$ is also topologically Noetherian.
\begin{proof}
Suppose $f_1, f_2,...,f_m \in C$ define the image closure $\closure {\Im (\pi^* \circ \phi^*)}$. We claim that their images $\pi f_1, \pi f_2,...,\pi f_m \in B$ define the image closure $\closure{\Im(\phi^*)}$. First note that $\closure {\Im (\pi^* \circ \phi^*)}= \pi ^* (\closure{\Im (\phi^*))}$ as $\pi^*$ is a closed embedding. Let $p \in \closure{\Im (\phi^*)}$, then $\pi f_i(p)=f_i(\pi^*p)=0$. Thus for all $i=1,2,...,m$ we have that $\pi f_i$  vanishes on $\closure{\Im (\phi^*)}$.

Conversely, pick $p \in \Spec B$ such that $p$ is in the
closed subset defined by $\pi f_1,\pi f_2,...,\pi f_m$, then $\pi ^* p $ lies in the variety defined by $f_1,f_2,...,f_m$, that is, by our supposition, $\pi ^* p \in \closure {\Im (\pi^* \circ \phi^*)}$. Now let $g$ be an element of $\Ker(\phi)$. By the surjectivity of $\pi$, there exists $g'\in C$ such that $\pi g'=g$, which implies that $g'\in \Ker(\phi \circ \pi)$. Hence $g'$ vanishes on $\pi ^*(p)$. So $g(p)=\pi g'(p)=g'(\pi ^*(p))=0$. Hence $p \in \closure{\Im (\phi^*)}$.

The last statement is straight forward. Every descending
chain of closed subsets in $\closure{\Im (\phi^*)}$ maps
under $\pi ^*$ to a descending chain of closed subsets in
$\closure {\Im (\pi^* \circ \phi^*)}$ because $\pi ^*$ is a
closed embedding. The latter chain stabilizes and, and when
it does, so does the former chain. This completes the proof.
\end{proof}
\end{prop}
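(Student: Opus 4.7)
The plan rests on the single observation that, because $\pi \colon C \to B$ is surjective, the dual morphism $\pi^* \colon \Spec B \to \Spec C$ is a closed embedding levelwise: for each $S \in \FI$, the map $\pi_S \colon C(S) \to B(S)$ is a surjection of $K$-algebras, so $\pi^*_S$ identifies $\Spec B(S)$ with the closed subscheme $V(\ker \pi_S) \subseteq \Spec C(S)$. In particular $\pi^*$ is injective on points and a homeomorphism onto its image on each level, which commutes with taking Zariski closures. Thus
\[ \closure{\Im(\pi^* \circ \phi^*)} = \closure{\pi^*(\Im(\phi^*))} = \pi^*\bigl(\closure{\Im(\phi^*)}\bigr) \]
as $\FIop$-subschemes of $\Spec C$.

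For the finiteness statement, I would take finitely many elements $f_1,\ldots,f_m \in C$ whose common vanishing set cuts out $\closure{\Im(\pi^*\circ \phi^*)}$ and claim that the images $\pi(f_1),\ldots,\pi(f_m) \in B$ cut out $\closure{\Im(\phi^*)}$. One inclusion is immediate: if $p \in \closure{\Im(\phi^*)}$, then $\pi^*p \in \closure{\Im(\pi^*\circ \phi^*)}$, so $(\pi f_i)(p) = f_i(\pi^*p) = 0$, for every injection moving $f_i$ into the right $\FI$-level as well. For the reverse inclusion, I pick $p \in \Spec B$ vanishing on all $\pi(f_i)$; then $\pi^*p$ vanishes on all $f_i$, hence $\pi^*p \in \closure{\Im(\pi^*\circ \phi^*)} = \pi^*(\closure{\Im(\phi^*)})$, and injectivity of $\pi^*$ yields $p \in \closure{\Im(\phi^*)}$.

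For the Noetherianity statement, I would use the same identification: $\pi^*$ restricts to a homeomorphism from $\closure{\Im(\phi^*)}$ onto $\closure{\Im(\pi^*\circ \phi^*)}$, compatible with the $\FIop$-structure. Hence any descending chain of closed $\FI$-subschemes in $\closure{\Im(\phi^*)}$ pushes forward under $\pi^*$ to a descending chain of closed $\FI$-subschemes in $\closure{\Im(\pi^*\circ \phi^*)}$, and a stabilisation of the latter pulls back to a stabilisation of the former.

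I do not expect any serious obstacle here: the proposition is essentially the statement that a closed embedding transports the two properties in question backwards, and the only mildly delicate point is keeping track of the $\FI$-variables (that is, ensuring that an element cutting out an $\FIop$-subscheme is really interpreted through all injections $\sigma \colon S \to T$, so that vanishing of $f$ on a point genuinely means vanishing of $\sigma f$ for all $\sigma$). Once the notational convention $f(p)=0$ laid down just before the proposition is in force, the rest is formal.
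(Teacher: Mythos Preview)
Your proposal is correct and follows essentially the same route as the paper: both hinge on the fact that $\pi^*$ is a closed embedding, hence $\closure{\Im(\pi^*\circ\phi^*)}=\pi^*\bigl(\closure{\Im(\phi^*)}\bigr)$, and both push the defining equations $f_i$ forward via $\pi$. The only cosmetic difference is in the reverse inclusion: the paper verifies $p\in\closure{\Im(\phi^*)}$ by lifting an arbitrary $g\in\Ker(\phi)$ to $g'\in C$ and checking $g(p)=g'(\pi^*p)=0$, whereas you invoke injectivity of $\pi^*$ directly on $\pi^*p\in\pi^*\bigl(\closure{\Im(\phi^*)}\bigr)$; these are the same argument phrased two ways.
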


\begin{re}\label{re:B_is_free}
Let $B$ be an $\FI$-algebra that is finitely generated in
width at most $d$ and $\phi:B \rightarrow A$ be a homorphism
of $\FI$-algebras. Then, by Lemma \ref{lem:surjectivity}, there exists a surjective map $\pi$ from $B[k_0,k_1,..., k_d]$ to $B$. Thus Proposition \ref{prop:reductionB} implies that it suffices to prove the main theorem for the $\FI$-algebra $B[k_0,k_1,..., k_d]$ instead of $B$, i.e. without loss of generality, in the main theorem, we can assume that $B$ is free.
\end{re}

\subsection{Making $A$ free, as well}

\begin{prop}\label{prop:reductionA}
Let $\phi : B \rightarrow A$, $\psi : B \rightarrow A'$ and $\pi : A'\rightarrow A$ be homomorphisms of finitely generated $\FI$-algebras such that $\pi \circ \psi = \phi$. If the closure $\closure{\Im(\psi ^*)}$ of the image of $\psi ^*$ is topologically Noetherian, then the closure $\closure{\Im(\phi ^*)}$ of the image of $\phi ^*$ is also topologically Noetherian. If, moreover, $\closure{\Im(\psi ^*)}$ is defined by finitely many elements in $B$, then $\closure{\Im(\phi ^*)}$ is also defined by finitely many elements in $B$.
\begin{proof}
The first statement is a straight-forward implication of the fact that a closed subspace of a topologically Noetherian space is topologically Noetherian. Since $\Ker (\psi) $ is contained in $\Ker(\phi)$, we have
\begin{align*}
\closure{\Im(\phi ^*)}=V_{\Spec B}(\Ker (\phi)) = V_{\Spec B}(\Ker (\pi \circ \psi))    
\end{align*}
is a closed subset of $V_{\Spec B}(\Ker(\psi))= \closure{\Im(\psi ^*)}$. The latter is topologically Noetherian, hence so is the former.

For the last statement, observe that $\closure{\Im(\phi ^*)}$, being a closed subset of a topologically Noetherian space $\closure{\Im(\psi ^*)}$, is defined by finitely many elements in the coordinate ring $K[\closure{\Im(\psi ^*)}]=B/I$, where $I=I(\closure{\Im(\psi ^*)})$ is an ideal generated by finitely many elements in $B$. This implies that $\closure{\Im(\phi ^*)}$ is defined by finitely many elements in $B$.
\end{proof}
\end{prop}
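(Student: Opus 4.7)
The statement to prove is that $\closure{\Im(\phi^*)}$ inherits good properties from $\closure{\Im(\psi^*)}$ given the factorisation $\phi = \pi \circ \psi$. My plan is to exploit the factorisation on the level of kernels: since $\phi = \pi \circ \psi$, for any $f \in B$ with $\psi(f) = 0$ we automatically have $\phi(f) = 0$, i.e.\ $\Ker(\psi) \subseteq \Ker(\phi)$. Dually, this yields the containment $\closure{\Im(\phi^*)} \subseteq \closure{\Im(\psi^*)}$ inside $\Spec(B)$; indeed $\closure{\Im(\phi^*)} = V_{\Spec B}(\Ker \phi)$ and analogously for $\psi$, and taking $V_{\Spec B}$ is inclusion-reversing. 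This geometric containment is the one observation that drives both halves of the proposition.

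For the first (topological Noetherianity) assertion, I would simply invoke Lemma \ref{lem:subspace}: $\closure{\Im(\phi^*)}$ is a closed $\FIop$-subscheme of the topologically Noetherian $\FIop$-scheme $\closure{\Im(\psi^*)}$, and closed subsets of topologically Noetherian $\FIop$-schemes are topologically Noetherian in the induced topology. Nothing more is needed here — all the work has been pushed into the hypothesis.

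For the second assertion, suppose $\closure{\Im(\psi^*)}$ is cut out in $\Spec(B)$ by finitely many elements $g_1, \ldots, g_r \in B$ (meaning their $\FI$-orbits, generating a finitely generated $\FI$-ideal $I \subseteq B$). Then the coordinate ring of $\closure{\Im(\psi^*)}$ is the quotient $\FI$-algebra $B/\sqrt{I}$. Since $\closure{\Im(\phi^*)}$ sits as a closed $\FIop$-subscheme inside $\closure{\Im(\psi^*)}$, and topological Noetherianity of the latter means the descending chain condition on closed $\FIop$-subschemes, I would deduce that $\closure{\Im(\phi^*)}$ is cut out inside $\closure{\Im(\psi^*)}$ by some finitely generated $\FI$-ideal $J$ of $B/\sqrt{I}$. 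Lifting finitely many $\FI$-generators of $J$ back to $B$ and combining them with $g_1, \ldots, g_r$ gives a finite set of elements of $B$ defining $\closure{\Im(\phi^*)}$.

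The step I expect to need the most care is the very last one: extracting from the abstract topological Noetherianity of $\closure{\Im(\psi^*)}$ the concrete statement that any of its closed $\FIop$-subschemes is $\FI$-finitely-defined. This is morally the content of Noetherianity, but to make it rigorous one must check that ``defined set-theoretically by finitely many elements'' really is equivalent to ``the radical of the vanishing ideal is a finitely generated $\FI$-ideal'' — this is the reformulation explicitly noted in the Main Theorem. Once this bookkeeping between set-theoretic defining equations, radicals of $\FI$-ideals, and the descending chain condition on reduced closed $\FIop$-subschemes is made precise, the proof reduces to the short containment argument above.
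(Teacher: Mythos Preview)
Your proposal is correct and follows essentially the same route as the paper: both deduce $\closure{\Im(\phi^*)} \subseteq \closure{\Im(\psi^*)}$ from the kernel inclusion $\Ker(\psi) \subseteq \Ker(\phi)$, invoke Lemma~\ref{lem:subspace} for the Noetherianity claim, and then extract finite defining equations inside the Noetherian space before lifting them to $B$. If anything, you are more explicit than the paper about the passage from topological Noetherianity to ``every closed subset is cut out by finitely many equations,'' which the paper uses but does not spell out.
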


\begin{re}\label{re:A_is_free}
Let $A$ be an $\FI$-algebra that is finitely generated in
width at most $1$ and let $\phi: B[k_0,k_1,..., k_d]
\rightarrow A$ be a morphism. Then by Lemma
\ref{lem:surjectivity} there is a surjective map $\pi
:B[k'_0,k'_1] \rightarrow A$. By universality of $B[k_0,k_1,..., k_d]$ (see
Remark \ref{re:universality}), there exists (not necessarily
unique) map $\psi : B[k_0,k_1,..., k_d] \rightarrow B[k'_0,k'_1]$ such that $\phi = \pi \circ \psi$. Thus Proposition \ref{prop:reductionA} implies that it suffices to prove the main theorem for $B[k'_0,k'_1]$ instead of $A$ i.e. without lost of generality, in the main theorem, we can assume that $A$ is free.
\end{re}

\section{Proof of the Main Theorem}

\subsection{Flattening}
For a more detailed description of flattening see \cite{draisma-kutter:boundedtensors}.
\begin{de}
Let $V= \bigotimes_{j=1}^n V_j$ be a tensor product of vector spaces $V_j$ over a field $L$. Then for each $i \in [n]$, there is a natural isomorphism
\begin{align*}
    \flat_{i}: V \rightarrow \left(\bigotimes_{j \neq i} V_j
    \right)  \otimes V_i.
\end{align*}
For a $t \in V$ the image $\flat_{i} (t)$ is a $2$-tensor
called a {\em flattening} of $t$.
\end{de}

A tensor has rank $1$ if it can be written as a tensor product of vectors $v_i \in V_i$.
A tensor $t$ has rank $l$ if $l$ is the minimum number of rank 1 tensors that sum to $t$. For $2$-tensors, the tensor rank is equal to the ordinary matrix rank.

\begin{re}
Flattening does not increase the rank of a tensor, i.e. $rk(\flat_i(t)) \leq rk(t)$.
\end{re}

\subsection{Tensors}

Define an $\FI$-algebra $C_d$ over a ring $K$ that sends a finite set $S$ to the $K$-algebra,
\begin{align*}
    C_d(S)=K[y_{i_1,i_2,...,i_d}: i_1,i_2,...,i_d \in S],
\end{align*}
and an injection $\sigma: S \rightarrow T$ to $C_d(\sigma): C_d(S) \rightarrow C_d(T)$ maps $y_{i_1, i_2,...,i_d}$ to $y_{\sigma (i_1), \sigma (i_2),...,\sigma(i_d)}$.

\begin{re}
For a finite set $S$, $\Spec C_d(S)$ is the set of $S \times S \times...\times S$-tensors, now including entries on the big diagonal.
\end{re}

There is a natural inclusion $\iota: B_d \rightarrow C_d$. Dually,
we have the projection map $\iota^*: \Spec C_d \rightarrow \Spec
B_d$. For a tensor $t \in \Spec C_d(S)$ we think of the
flattening $\flat_i(t)$ as a matrix whose rows and columns
are labeled by tuples in $S^{[d] \setminus \{i\}}$ and elements in $S$ respectively.

\begin{de}
A tuple $(x_i)_{i \in [d]}$ in $S^d$ is called a {\em distinct
value tuple} if $x_i \neq x_j $ when $i \neq j$. Denote by $DS^d$ the set of distinct value tuples in $S^d$.
\end{de}

\begin{de}
An {\em off-diagonal $l \times l$ sub-matrix} of $\flat_i(t)$ is a $u_1 \times u_2$-sub-matrix where $u_1 \subset DS^{d-1}$, $u_2 \subset S$ such that $u_1 \times u_2 \subset DS^d$ and $|u_1|=|u_2|=l$ .
By an off-diagonal $l \times l$-sub-determinant of $\flat _i(t)$, we mean the determinant of an off-diagonal $l \times l$-sub-matrix of $\flat_i(t)$ .
\end{de}

Fix non-negative integers $l$ and $d$. Let $Z_{d,l}$ be the subset of $\Spec B_d$ defined as follows: for all $S \in \FI$, $Z_{d,l}(S)$ is the variety defined by all off-diagonal $(l+1)\times (l+1)$-sub-determinants of $\flat _i(y),$ for all $i \in [d] $ and $y \in \Spec B_d(S)$. The following lemma shows that $Z_{d,l}$ is defined by finitely many equations in $B_d$.

\begin{lem}\label{lem:Z_l is f.gen.}
The subset $Z_{d,l}$ of $\Spec B_{d}$ is defined by finitely many equations in $B_{d}$.

\begin{proof}
  Let $S$ be a finite set, let $y \in \Spec B_{d}\left( S \right)$ and $i \in \left[ d \right]$ be arbitrary. Let $h$ be an arbitrary off-diagonal $\left( l+1 \right) \times \left( l+1 \right) $-subdeterminant of the flattening $\flat_{i}\left( y \right)$. The subdeterminant $h$ is given by indices $i_{1}, \dots, i_{l+1} \in S$ and distinct value tuples $\alpha_{1}, \dots, \alpha_{l+1} \in DS^{d-1}$ such that none of the indices $i_{j}$ occur in any of the $\alpha_{k}$ and such that all the $\alpha_{k}$ are distinct.\\
  Let $S'$ be the subset of $S$ consisting of all the
  entries of the $i_{j}$ and $\alpha_{k}$. Then $| S' | \leq
  \left( d-1 \right) \cdot \left( l+1 \right) + l+ 1 = d
  \cdot \left( l+1 \right)$. Set $ n = | S' | $ and choose a
  bijection $\rho \colon \left[ n \right] \rightarrow S'$.
  Define $\alpha'_{j} = \rho^{-1} \circ \alpha_{j}$ and
  $i'_{j} = \rho^{-1}(i_j)$ for all $j \in \left[ l+1
  \right]$, where we consider the $\alpha_{j}$ as maps from
  $\left[ d-1 \right]$ to $S'$.\\
  Then $h = B_{d}\left( \rho \right)h'$, where $h'$ is the off-diagonal subdeterminant of the flattening $\flat_{i}\left( \left( y_{\beta} \right)_{\beta \in D\left[ n \right]^{d-1}} \right)$ given by columns $i'_{1}, \dots, i'_{l+1}$ and rows $\alpha'_{1}, \dots, \alpha'_{l+1}$.
  Since $n$ can take on at most $d \cdot \left( l+1 \right)$ values, this construction yields finitely many polynomials $h'$. These polynomials define $Z_{d,l}$.
\end{proof}
\end{lem}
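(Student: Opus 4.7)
The plan is to exploit the fact that an off-diagonal $(l+1)\times(l+1)$-subdeterminant of any flattening involves only a bounded number of elements of $S$, and then to use this bound to show that, up to the $\FI$-action, there are only finitely many such subdeterminants.

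First I would fix $S\in\FI$, $i\in[d]$, and an off-diagonal $(l+1)\times(l+1)$-subdeterminant $h$ of $\flat_i(y)$. Such an $h$ is determined by choosing $l+1$ row indices $\alpha_1,\dots,\alpha_{l+1}\in DS^{d-1}$ and $l+1$ column indices $i_1,\dots,i_{l+1}\in S$ subject to the off-diagonal condition. Let $S'\subseteq S$ be the set of all elements of $S$ appearing in any $\alpha_j$ or any $i_k$. Then $|S'|\le (l+1)(d-1)+(l+1)=d(l+1)$, which is the crucial finite bound.

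Next, setting $n=|S'|\le d(l+1)$, I would pick a bijection $\rho\colon[n]\to S'$, transport the indices $\alpha_j$ and $i_k$ back through $\rho^{-1}$, and observe that $h = B_d(\rho)(h')$, where $h'\in B_d([n])$ is the analogous subdeterminant on $[n]$. Thus every off-diagonal $(l+1)\times(l+1)$-subdeterminant of a flattening of any $y\in\Spec B_d(S)$ lies in the $\FI$-orbit of some element of $\bigcup_{n\le d(l+1)} B_d([n])$. Since the collection of off-diagonal $(l+1)\times(l+1)$-subdeterminants of flattenings of the generic tensor in $B_d([n])$ is itself a finite set (there are only finitely many choices of $i\in[d]$, of $l+1$ columns in $[n]$, and of $l+1$ row tuples in $D[n]^{d-1}$), the union over $n\le d(l+1)$ yields finitely many polynomials $h'$ whose $\FI$-orbits cut out $Z_{d,l}$.

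There is no real obstacle here: the entire content is the width bound $|S'|\le d(l+1)$, after which the argument is just bookkeeping via the universal property of $B_d$. The step that requires a little care is verifying that the equation $h = B_d(\rho)(h')$ holds, i.e.\ that under the $\FI$-morphism induced by $\rho$, indexed subdeterminants transform as expected — but this is immediate from the definition of $B_d(\sigma)$ on generators $y_{j_1,\dots,j_d}$ and the multilinearity of the determinant.
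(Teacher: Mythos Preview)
Your proposal is correct and follows essentially the same approach as the paper: both arguments bound the set $S'$ of indices appearing in an off-diagonal $(l+1)\times(l+1)$-subdeterminant by $|S'|\le d(l+1)$, choose a bijection $\rho:[n]\to S'$, and observe that $h=B_d(\rho)h'$ for a subdeterminant $h'\in B_d([n])$, yielding finitely many generators. The only difference is cosmetic --- you spell out a bit more explicitly why there are finitely many such $h'$ in each $B_d([n])$.
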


\begin{lem}\label{lem:containment_in_a_subdeterminant_variety}
Let $\phi : B_d \rightarrow B_{1}^{\otimes k}$ be an $\FI$-algebra homomorphism. Then $\closure {\Im (\phi^*)} \subset Z_{d,l}$ for some $l$.
\begin{proof}
By Lemma \ref{lem:homomorphism}, the map $\phi$ is determined by the image of $y_{1,2,...,d} \in B_d([d])$.
First we suppose that $y_{1,2,...,d} \in B_d([d])$ is mapped under $\phi$ to a monomial $M=\prod_{i\in [k],j \in [d]}x_{i,j}^{\alpha_{i,j}} \in B_1^{\otimes k}([d])$.
Dually, $a=(a_{i,j})_{i \in [k], j \in S} \in \Spec
B_1(S)^{\otimes k}$ is mapped to the projection (via
$\iota^*$) of the following tensor:
\begin{align*}
    (a_{j_1,j_2,...,j_d}= \prod_{i=1}^{k}a_{i,j_1}^{\alpha_{i,1}}\cdot \prod_{i=1}^{k}a_{i,j_2}^{\alpha_{i,2}} \cdots \prod_{i=1}^{k}a_{i,j_d}^{\alpha_{i,d}})_{j_1,j_2,...,j_d \in S}\\
    = \bigotimes_{j=1}^{d} (\prod_{i=1}^{k}a_{i,s}^{\alpha_{i,j}})_{s \in S} \in \Spec B_d(S),
\end{align*}

that is, $\phi^*(a)$ is the projection of a rank $\leq 1$ tensor. Thus $\Im(\phi^*)$ is contained in the projection of rank $\leq 1$ tensors.

Now, if $y_{1,2,...d}\in B_d([d])$ is mapped under $\phi$ to
$f = \sum_{c=1}^{l}M_c$ which is a sum of, say $l$,
monomials in the variables $x_{i,j}$ where $1 \leq i \leq k$
and $ 1\leq j \leq d$, then the subadditivity of tensor rank
implies that $\Im(\phi^*)$ lies in the projection $i^*$ of
tensors having rank at most $l$. Since flattening does not
increase rank, this implies $\Im(\phi^*) \subset Z_{d,l}$.
Hence $\closure {\Im (\phi^*)} \subset Z_{d,l}$.
\end{proof}
\end{lem}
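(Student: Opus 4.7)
By Lemma \ref{lem:homomorphism}, the homomorphism $\phi$ is completely determined by the single element $f := \phi(y_{1,2,\ldots,d}) \in B_1^{\otimes k}([d]) = K[x_{i,j} : i \in [k],\ j \in [d]]$. Write $f$ as a sum of monomials, $f = \sum_{c=1}^{l} M_c$ with $M_c = \prod_{i=1}^{k}\prod_{m=1}^{d} x_{i,m}^{\alpha^c_{i,m}}$, and declare this $l$ to be the one we will use. The plan is to show that when evaluated on an arbitrary point of $\Spec B_1^{\otimes k}(S)$, the image $\phi^*(a)$ in $\Spec B_d(S)$ is obtained by projecting (via $\iota^*$) a \emph{full} tensor on $S^d$ of rank at most $l$; the conclusion then follows because flattenings of rank-$\leq l$ tensors have matrix rank $\leq l$.

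Concretely, fix a point $a = (a_{i,j})_{i \in [k],\ j \in S} \in \Spec B_1^{\otimes k}(S)$. For each distinct-value tuple $(j_1,\ldots,j_d) \in DS^d$, letting $\sigma \colon [d] \to S$ be the injection $m \mapsto j_m$, the naturality of $\phi$ gives $\phi(y_{j_1,\ldots,j_d}) = B_1^{\otimes k}(\sigma)(f)$; evaluating at $a$ and expanding monomial by monomial produces
\[
(\phi^*(a))_{j_1,\ldots,j_d} \;=\; \sum_{c=1}^{l}\, \prod_{m=1}^{d} v^{(c,m)}_{j_m}, \qquad v^{(c,m)}_{s} \;:=\; \prod_{i=1}^{k} a_{i,s}^{\alpha^c_{i,m}}.
\]
The right-hand expression makes sense for \emph{every} $(j_1,\ldots,j_d) \in S^d$, not just off-diagonal ones, and as such defines an element $T \in \Spec C_d(S)$ with $\iota^*(T) = \phi^*(a)$. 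Each summand indexed by $c$ is the rank-$1$ tensor $v^{(c,1)} \otimes \cdots \otimes v^{(c,d)}$, so by subadditivity the tensor $T$ has rank at most $l$.

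Since flattening does not increase rank (the remark right after the definition of flattening), each matrix $\flat_i(T)$ has ordinary matrix rank $\leq l$, so every $(l+1) \times (l+1)$ minor of $\flat_i(T)$ vanishes. In particular, this holds for the off-diagonal $(l+1)\times(l+1)$ minors, which are exactly the generators of $Z_{d,l}$; hence $\phi^*(a) \in Z_{d,l}(S)$. Taking Zariski closure yields $\closure{\Im(\phi^*)} \subseteq Z_{d,l}$.

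The only mildly subtle step is the explicit rank-$1$ decomposition in the second paragraph: one must check that the $k$-fold product $\prod_i a_{i,j_m}^{\alpha^c_{i,m}}$ depends on the tuple $(j_1,\ldots,j_d)$ only through $j_m$, so that the $m$-th factor can be pulled out independently and the summand really does factorise as a pure tensor product. This is automatic from the structure of monomials in the variables $x_{i,m}$, but it is the conceptual content of the lemma; everything else is a bookkeeping reduction or an invocation of standard rank inequalities.
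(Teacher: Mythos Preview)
Your proof is correct and follows essentially the same route as the paper: write $\phi(y_{1,\ldots,d})$ as a sum of $l$ monomials, observe that each monomial contributes a rank-$1$ tensor so that $\phi^*(a)$ is the off-diagonal restriction of a full tensor of rank $\leq l$, and conclude via the rank bound on flattenings. The only difference is cosmetic---the paper first treats a single monomial and then invokes subadditivity, whereas you handle the sum directly and spell out the naturality computation a bit more explicitly.
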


\begin{re}
Let $\phi : B[k_0,k_1,...,k_d] \rightarrow B_{1}^{\otimes k}$ be an $\FI$-algebra homomorphism. By universality of $B[k_0,k_1,...,k_d]$,  $ \phi = \bigotimes_{e=0}^{d} (\bigotimes_{i=1}^{k_e}\phi_{e,i})$ where, $ \phi_{e,i} : B_e \rightarrow B_1^{\otimes k}$ for all $e$ and for all $i$,  and $\Im(\phi_{e,i}^*) \subset Z_{e,l_{e,i}}$ for some $l_{e,i}$. Let $l$ be the maximum of the set $\{l_{e,i}: 0 \leq e \leq d\,\, \text{and}\,\, 0 \leq i \leq k_e \}$. Then $\Im (\phi^*) \subset \prod _{e=0}^{d}Z_{e,l}^{k_e}$.
\end{re}

For a commutative ring $R$ with $1$, and a prime ideal $p \in \Spec R$, $\kappa(p)$ denotes the fraction field of the integral domain $R / p$.

\begin{lem}\label{lem:Y}
Let $Y$ be a subset of $\Spec C_d$. Suppose that there exists $N \in \ZZ_{\geq 0}$ such that for all $S $, for all $p \in Y(S)$, the tensor rank of $p$ over the field $\kappa(p)$ is at most $N$. Then there exist an $\FI$-algebra $A$ that is finitely generated in width at most $1$ and a map $\phi_N^*: \Spec A \rightarrow \Spec C_d $ such that $Y \subset \Im(\phi_N^*)$.
\end{lem}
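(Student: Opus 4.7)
\medskip

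\noindent\textbf{Proof proposal for Lemma \ref{lem:Y}.}
The plan is to take $A$ to be the $\FI$-algebra that parametrises ordered sums of $N$ rank-one $d$-tensors, and to define $\phi_N:C_d \to A$ as the sum-of-products map. Concretely, I would set
\[
A(S) \;=\; K\bigl[a_{c,j,s} \;\big|\; c \in [N],\; j \in [d],\; s \in S\bigr],
\]
with an injection $\sigma:S \to T$ acting by $a_{c,j,s} \mapsto a_{c,j,\sigma(s)}$. This is (isomorphic to) $B_1^{\otimes Nd}$, and in particular it is finitely generated in width $\leq 1$, being generated by the finite set $\{a_{c,j,1} \mid c \in [N], j \in [d]\} \subset A([1])$.

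Next I would define a morphism of $\FI$-algebras $\phi_N:C_d \to A$ by
\[
\phi_N\bigl(y_{i_1,\ldots,i_d}\bigr) \;=\; \sum_{c=1}^{N} \prod_{j=1}^{d} a_{c,j,i_j}
\]
for each $S \in \FI$ and $i_1,\ldots,i_d \in S$. The naturality check is immediate: both $A(\sigma)\circ \phi_{N,S}$ and $\phi_{N,T} \circ C_d(\sigma)$ send $y_{i_1,\ldots,i_d}$ to $\sum_c \prod_j a_{c,j,\sigma(i_j)}$. Dualising gives the morphism of $\FIop$-schemes $\phi_N^*:\Spec A \to \Spec C_d$.

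To show $Y \subset \Im(\phi_N^*)$, I would argue pointwise. Fix $S \in \FI$ and $p \in Y(S)$, and let $\bar{y}_{i_1,\ldots,i_d} \in \kappa(p)$ denote the image of $y_{i_1,\ldots,i_d}$ under the canonical map $C_d(S) \to \kappa(p)$. By hypothesis the resulting $S\times\cdots\times S$-tensor over $\kappa(p)$ has rank at most $N$, so by the very definition of tensor rank there exist vectors $v_{c,j} \in \kappa(p)^S$ (for $c \in [N]$, $j \in [d]$) with
\[
\bar{y}_{i_1,\ldots,i_d} \;=\; \sum_{c=1}^{N} \prod_{j=1}^{d} v_{c,j}(i_j)
\qquad \text{for all } i_1,\ldots,i_d \in S.
\]
Define a $K$-algebra homomorphism $\psi:A(S) \to \kappa(p)$ by $a_{c,j,s} \mapsto v_{c,j}(s)$, and let $q = \ker \psi \in \Spec A(S)$. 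By construction $\psi \circ \phi_{N,S}$ sends $y_{i_1,\ldots,i_d}$ to $\bar{y}_{i_1,\ldots,i_d}$, so it equals the canonical map $C_d(S) \to \kappa(p)$ whose kernel is $p$. Taking kernels of the composition gives $\phi_{N,S}^{-1}(q) = p$, i.e.\ $\phi_N^*(q) = p$. Hence $p \in \Im(\phi_N^*)$, as required.

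\medskip

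\noindent I do not anticipate a real obstacle: the only substantive input is that tensor rank $\leq N$ over the field $\kappa(p)$ genuinely yields a decomposition into $N$ rank-one tensors, which is immediate from the definition and needs no algebraic closure or flatness hypothesis. The rest is bookkeeping on the functoriality of $\phi_N$ and on the translation between ring homomorphisms into $\kappa(p)$ and points of $\Spec A$.
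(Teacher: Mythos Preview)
Your proof is correct and follows essentially the same approach as the paper's: both take $A = C_1^{\otimes [N]\times[d]} \cong B_1^{\otimes Nd}$ and define $\phi_N$ by $y_{i_1,\ldots,i_d} \mapsto \sum_c \prod_j a_{c,j,i_j}$, then use the rank-$\leq N$ decomposition over $\kappa(p)$ to produce a preimage of $p$. Your version is in fact more explicit than the paper's about the scheme-theoretic bookkeeping (constructing $\psi:A(S)\to\kappa(p)$ and taking $q=\ker\psi$), where the paper simply asserts the existence of a suitable $\kappa(p)$-point.
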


In this lemma, $Y(S)$ is an arbitrary subset of $\Spec(C(S))$, in such
a manner that for any $\pi \in \Hom_\FI(S,T)$, the map $C(\pi)^*$ maps
$Y(T)$ into $Y(S)$.

\begin{proof}
Define a homomorphism of $\FI$-algebras $\phi_N : C_d
\rightarrow A=C_1^{\otimes [N]\times [d]}$ by sending
$y_{i_1,i_2,...,i_d}$ to $\sum_{j=1}^{N}
x_{j,1,i_1}x_{j,2,i_2}\cdots x_{j,d,i_d}$. By
assumption, for each $S$ and for every
$p=(p_{i_1,i_2,...,i_d})_{i_1,i_2,...,i_d \in S} \in Y(S)$,
there exists $(q_{j,l,i_l})_{(j,l,i_l)\in [N]\times
[d]\times S } \in \Spec C_1^{\otimes [N]\times
[d]}(S)(\kappa (p))$ such that $p_{i_1,i_2,...,i_d}=
\sum_{j=1}^{N} q_{j,1,i_1}q_{j,2,i_2}\cdots q_{j,d,i_d}$.
Hence $Y \subset \Im(\phi_N^*)$.
\end{proof}

\begin{lem}\label{lem:Z}
Let $Z$ be a subset of $\Spec B_d$. Suppose that there
exists $N \in \ZZ_{\geq 0}$ such that for all $S \in \FI$
and for all $p\in Z(S)$ there exists $\tilde{p} \in \Spec
C_d(S)$ such that the tensor rank of $\tilde{p}$ over the
field $\kappa(\tilde{p})$ is at most $N$ and $\iota^*(\tilde
p)=p$ where $\iota: B_d \rightarrow C_d$ is the inclusion map. Then there exists a map $\psi : B_d \rightarrow A$ such that $Z \subset \Im(\psi^*)$, where $A$ is an $\FI$-algebra that is finitely generated in width at most 1.

\begin{re}
The condition on the off-diagonal tensor $p$ is that it can be completed to a (full) tensor $\tilde{p}$ of rank $\leq N$, where $N$ does not depend on $p$ or $S$.
\end{re}
\begin{proof}
Define $Y:= \{ \tilde p \in (i^*)^{-1}(Z) :\,\, \text{tensor rank of}\,\, \tilde p \,\, \text{over}\,\, \kappa (\tilde p) \leq N \}$. By Lemma $\ref{lem:Y}$, $Y \subset \Im(\phi_N^*)$ and by construction, $Z= i^*(Y)$. Then $\psi = \phi_N \circ i $ is the required map such that $Z \subset \Im(\psi^*)$.
\end{proof}
\end{lem}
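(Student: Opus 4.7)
The plan is to reduce this directly to Lemma \ref{lem:Y} applied to a natural preimage of $Z$ inside $\Spec C_d$, and then to compose the resulting map with the inclusion $\iota$. Concretely, I would set
\[ Y(S) := \{ \tilde p \in \Spec C_d(S) \mid \iota^*(\tilde p) \in Z(S) \text{ and the tensor rank of } \tilde p \text{ over } \kappa(\tilde p) \text{ is at most } N \}, \]
viewed as an $\FI$-subset of $\Spec C_d$ in the sense of the clarifying remark after Lemma \ref{lem:Y}.

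First I would verify that $Y$ is actually preserved by $C_d(\pi)^*$ for every injection $\pi \colon S \to T$. The condition $\iota^*(\tilde p) \in Z$ is inherited from $Z$ by naturality of $\iota^*$ and by the assumption that $Z$ is itself $\FI$-stable. The rank condition is preserved because, under the usual identification, $C_d(\pi)^*$ is the restriction of an $T \times \cdots \times T$-tensor to the sub-cube indexed by $\pi(S)$, and restriction cannot increase tensor rank. By the hypothesis of the lemma, every $p \in Z(S)$ admits a lift $\tilde p \in Y(S)$ with $\iota^*(\tilde p) = p$, so $\iota^*(Y) = Z$.

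Next I would apply Lemma \ref{lem:Y} to $Y$, which yields a homomorphism $\phi_N \colon C_d \to A$ with $A = C_1^{\otimes [N] \times [d]}$ such that $Y \subseteq \Im(\phi_N^*)$. Since $C_1 = B_1$, this $A$ equals $B_1^{\otimes Nd}$ and is therefore finitely generated in width at most $1$. Setting $\psi := \phi_N \circ \iota \colon B_d \to A$, dualisation gives $\psi^* = \iota^* \circ \phi_N^*$, hence
\[ Z \;=\; \iota^*(Y) \;\subseteq\; \iota^*(\Im(\phi_N^*)) \;=\; \Im(\psi^*), \]
as required.

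The main potential obstacle is the $\FI$-stability check for $Y$, which reduces to the essentially standard fact that restricting a tensor of rank $\leq N$ to a sub-cube yields another tensor of rank $\leq N$; once that is granted, the rest is a formal diagram chase that piggybacks on Lemma \ref{lem:Y}.
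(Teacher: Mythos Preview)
Your proof is correct and follows exactly the paper's route: define the bounded-rank preimage $Y \subseteq \Spec C_d$, invoke Lemma~\ref{lem:Y} to get $\phi_N$, and set $\psi = \phi_N \circ \iota$. Your added $\FI$-stability check for $Y$ is more scrupulous than the paper (which omits it), though beware that tensor rank can rise when descending from $\kappa(\tilde p)$ to the subfield $\kappa(C_d(\pi)^*\tilde p)$; fortunately the proof of Lemma~\ref{lem:Y} never actually uses this stability, so the point is moot.
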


\begin{prop}\label{prop:induction}
Let $Z_{d,l}$ be the subset of $\Spec B_d$ defined above, by
the vanishing of off-diagonal $(l+1) \times
(l+1)$-subdeterminants of flattenings. Then there exists $N
\in \ZZ_{\geq 0}$ such that for all $S \in \FI$ and for all
$p\in Z_{d,l}(S)$ there exists $\tilde{p} \in \Spec C_d(S)$
such that the tensor rank of $\tilde{p}$ over the field
$\kappa(\tilde{p})$ is at most $N$ and $\iota^*(\tilde p)=p$
where $\iota: B_d \rightarrow C_d$ is the inclusion map.
\begin{proof}

We will prove this by a double induction: an outer induction
on $d$ and an inner induction on $l$. For $d=0$ and $d=1$,
we have $B_d=C_d$ and every element of $\Spec C_d(S)$ has
tensor rank at most 1, regardless of $l$. Assume the
proposition is true for $\leq d-1$. When $l=0$ note that
$Z_{d,0} $ contains only a zero tensor. Take $N=0$ and we
are done. Fix $l > 0$ and assume the proposition is true for
all smaller values. Write
$Z_{d,l}(S)=Z_{d,l-1}(S) \cup (Z_{d,l}(S) \setminus
Z_{d,l-1}(S))$. By the inner induction hypothesis, there exists $N_0$ such that for every $p\in Z_{d,l-1}(S)$ there exists $\tilde{p} \in \Spec C_d(S)$ such that the tensor rank of $\tilde{p}$ over the field $\kappa(\tilde{p})$ is at most $N_0$ and $i^*(\tilde p)=p$.

Now take $p \in Z_{d,l}([n]) \setminus Z_{d,l-1}([n])$. This
implies that there exist $i_0 \in [d]$ such that there is an
$l \times l $-sub-determinant $h$ of $\flat_{i_0}(p)$ which
is non-zero. The sub-determinant $h$ involves at most $\leq
l(d-1)+l=l \cdot d$ indices. Split $[n]$ into $S\sqcup T$
such that $S$ consists of indices that appear in $h$ and
$T=[n] \setminus S$; i.e., T is the set of indices that do
not appear in $h$. For each fixed subset $I \subset [d]$ and
fixed distinct value tuple $\alpha \in DS^{[d]\setminus I}$,
consider the off-diagonal tensor $p_{\alpha} = (p_{\alpha,
\beta})_{\beta \in DT^{I}}$. We regard $p_{\alpha}$ as an element of $\Spec B_{|I|}(T)$.

Case 1: If $I \neq [d]$, then the outer induction hypothesis
applies to $p_\alpha$. Indeed, flattening $\flat
_i(p_\alpha)$ for $i \in I$ yields a sub-matrix of
$\flat_i(p)$ and by the induction hypothesis, $p_\alpha = i^*(q_\alpha)$, where $q_\alpha \in \Spec C_{|I|}(T)$ has rank $\leq N_\alpha$.

Case 2: If $I=[d]$, then we want to find $q_{\emptyset} \in \Spec C_d (T)$ such that $p_{\emptyset}= i^*(q_{\emptyset})$ and the tensor rank of $q_{\emptyset}$ over the field $\kappa(p)$ is at most some $N_{\emptyset}$. Then we will have $p=i^*( \sum_{I,\alpha}(q_{\alpha}\,\, \text{padded with zeros}))$ such that the tensor rank of $p$ over the field $\kappa(p)$ is at most $N_1=\sum_{\alpha}N_{\alpha}$. Without loss of generality, assume that $i_0=d$. We have an off-diagonal $l \times l$-sub-matrix $M$ of the matrix $\flat_{d}(p)$ such that $\operatorname{det} (M)=h \neq 0$. Define $q_{\emptyset}$ as follows:

\begin{align*}
    q_{\emptyset} = \sum_{j \in u_2\,\, \text{and} \,\, \alpha \in u_1} (M^{-1})_{j,\alpha} q_{\{j\}} \otimes q_{\alpha}  = \sum _{j \in u_2} q_{\{j\}} \otimes (\sum_{\alpha \in u_1} (M^{-1})_{j,\alpha} \cdot q_{\alpha}).
\end{align*}

where $rk(q_{\{j\}}) \leq N_{\{j\}}$ and $rk (\sum_{\alpha \in u_1} (M^{-1})_{j,\alpha} \cdot q_{\alpha}) \leq 1$. Hence $rk(q_{\emptyset}) \leq N_{\emptyset} := \sum_{j \in u_2}N_{\{j\}}$.

Notations used in Figure \ref{fig:Flattening}: Rows and columns of the matrix are labeled by tuples in $(S \sqcup T)^{d-1}$ and elements in $S \sqcup T$ respectively. $M$ is a $u_1 \times u_2$-submatrix where $u_1 \subset DS^{d-1}$, $u_2 \subset S$ such that $u_1 \times u_2 \subset DS^d$ and $|u_1|=|u_2|=l$.

We want to show that $i^*(q_{\emptyset})=p_{\emptyset}$. Straightforward computations (from linear algebra) show that the matrix in the Figure \ref{fig:Flattening} has rank $=l$. In particular, all off diagonal $(l+1)\times (l+1)$-sub-determinants in positions $(u_1 \cup \{\alpha \})\times (u_2 \cup \{i\})$ are zero where $\alpha \in DT^{d-1}$ and $i \in T$. The same holds, by assumption for $p$. Hence, since $M$ is invertible and for all $j \in u_2$, $i^*(q_{\{j\}})=p_{\{j\}}$ and for all $\alpha \in u_1$, $i^*(q_{\alpha}) = p_{\alpha}$, we find that $(p_{\emptyset})_{\alpha,i} = (q_{\emptyset})_{\alpha,i}$, for all $\alpha \in DT^{d-1}$ and for all $i \in T$ such that $(\alpha,i) \in DT^{d}$. Hence $i^*(q_{\emptyset})=p_{\emptyset}$. $N=max\{N_0,N_1\}$ is the required $N$. This completes the proof.

\begin{figure}
    \centering
    \includegraphics[scale= 1.5]{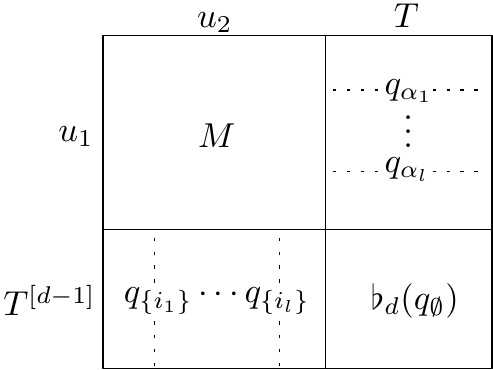}
    \caption{Flattening}
    \label{fig:Flattening}
\end{figure}
\end{proof}
\end{prop}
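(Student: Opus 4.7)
The plan is to prove the proposition by a double induction: an outer induction on $d$ and, nested within, an inner induction on $l$. The base cases are essentially immediate. For $d\in\{0,1\}$ there is no big diagonal, so $B_d = C_d$ and every tensor already has rank at most $1$, regardless of $l$; for the inner base case $l=0$ the subscheme $Z_{d,0}$ consists only of the zero tensor, so $N=0$ works. This lets me focus on the genuine step.

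For the inductive step, fix $d\geq 2$ and $l\geq 1$ and take $p \in Z_{d,l}(S)$. If $p$ already lies in $Z_{d,l-1}(S)$, the inner hypothesis supplies a completion of rank at most some $N_0$. Otherwise, by definition there is a flattening direction $i_0 \in [d]$ and a nonzero off-diagonal $l\times l$ subdeterminant $h$ of $\flat_{i_0}(p)$; the indices it involves form a set $S' \subseteq S$ of size at most $l\cdot d$. Set $T := S \setminus S'$ and, for each subset $I \subseteq [d]$ together with a distinct-value tuple $\alpha$ filling positions $[d]\setminus I$ with entries of $S'$, form the sub-tensor $p_\alpha \in \Spec B_{|I|}(T)$ obtained by slicing $p$ along $\alpha$. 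When $I\subsetneq[d]$, every flattening of $p_\alpha$ is a submatrix of the corresponding flattening of $p$, so the vanishing of $(l+1)\times(l+1)$ off-diagonal minors is inherited; since $|I|<d$, the outer induction hypothesis then yields completions $q_\alpha \in \Spec C_{|I|}(T)$ of rank bounded by constants depending only on $d$ and $l$.

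The substantive case is $I=[d]$: one must complete the all-$T$ slice $p_\emptyset$. WLOG $i_0=d$; let $u_1 \subset DS^{d-1}$ index the rows and $u_2 \subset S$ the columns of the $l\times l$ invertible submatrix $M$ of $\flat_d(p)$ coming from $h$. The natural guess is
\[
q_\emptyset \;:=\; \sum_{j \in u_2,\ \alpha \in u_1} (M^{-1})_{j,\alpha}\, q_{\{j\}} \otimes q_\alpha
\;=\; \sum_{j \in u_2} q_{\{j\}} \otimes \Bigl(\sum_{\alpha \in u_1} (M^{-1})_{j,\alpha}\, q_\alpha\Bigr),
\]
which manifestly has rank at most $\sum_{j \in u_2} N_{\{j\}}$. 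Assembling this with all the other $q_\alpha$ (padded by zeros outside their off-diagonal support) gives a candidate completion of $p$ whose total rank is bounded in terms of $d$ and $l$ alone, so taking $N := \max(N_0, N_1)$ with $N_1$ this combined bound will close the induction.

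The main obstacle, and where I expect the real work to be, is verifying $\iota^*(q_\emptyset) = p_\emptyset$, i.e.~entrywise agreement on every off-diagonal position $(\alpha,i) \in DT^d$. The guiding principle is that $\flat_d(q_\emptyset)$ has been engineered so that its $(u_1 \cup \{\alpha\}) \times (u_2 \cup \{i\})$ submatrix has rank exactly $l$, while the corresponding submatrix of $\flat_d(p)$ has rank at most $l$ by the defining equations of $Z_{d,l}$. Combining invertibility of $M$ with the equalities $\iota^*(q_{\{j\}}) = p_{\{j\}}$ and $\iota^*(q_\alpha) = p_\alpha$ from the outer IH, every $(l+1)\times(l+1)$ minor formed from these rows and columns vanishes on both $p$ and $q_\emptyset$; expanding such a minor along the added row and column then forces the $(\alpha,i)$-entries to coincide. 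Making this linear-algebra argument work uniformly over $\kappa(p)$, and keeping the rank bookkeeping clean, is the technical heart of the proof.
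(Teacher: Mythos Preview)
Your proposal is correct and follows essentially the same approach as the paper's proof: the same double induction on $d$ and $l$, the same base cases, the same splitting of the index set according to a nonvanishing $l\times l$ minor, the same decomposition into slices $p_\alpha$, the identical formula for $q_\emptyset$ via $M^{-1}$, and the same linear-algebra verification that the vanishing of the $(l+1)\times(l+1)$ minors together with invertibility of $M$ forces $\iota^*(q_\emptyset)=p_\emptyset$. The only difference is presentational---you are slightly more explicit about why the minor expansion pins down the $(\alpha,i)$-entry, whereas the paper defers this to a figure and the phrase ``straightforward computations.''
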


Recall from Remark \ref{re:B[K_0,...,k_e]} that for any nonnegative integers $e,k_0,k_1,...,k_{e}$,
\newline $B[k_0,k_1,...,k_{e}]$ denotes the following $\FI$-algebra $\bigotimes_{i=0}^e B_{i}^{\otimes k_i}$.
\begin{lem}\label{lem:Z over the tensor product}
For any morphism $\phi:B=B[k_0,k_1,...,k_{e}] \to A=B_1^{\otimes k}$ there exist a closed subset $Z \subset \Spec B$ defined by finitely many equations in $B$, an $\FI$-algebra $A'$ that is finitely generated in width at most $1$, and a morphism $\psi:B \to A'$ such that $\closure{\Im(\phi^*)} \subset Z \subset \Im(\psi^*)$.  
\end{lem}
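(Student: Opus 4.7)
The strategy is to decompose everything along the tensor factors of $B = \bigotimes_{i=0}^{e} B_i^{\otimes k_i}$ and then glue. First, by the universal property of $B$ (Remark~\ref{re:universality}), $\phi$ is uniquely determined by the collection of restrictions $\phi_{i,j} \colon B_i \to A = B_1^{\otimes k}$ for $i \in \{0,\ldots,e\}$ and $j \in \{1,\ldots,k_i\}$; dually, $\phi^*$ is the map into $\Spec(B) = \prod_{i,j} \Spec(B_i)$ whose $(i,j)$-th component is $\phi_{i,j}^*$. Hence $\Im(\phi^*) \subseteq \prod_{i,j} \Im(\phi_{i,j}^*)$. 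I would then invoke Lemma~\ref{lem:containment_in_a_subdeterminant_variety} to obtain, for each pair $(i,j)$, an integer $l_{i,j}$ with $\closure{\Im(\phi_{i,j}^*)} \subseteq Z_{i,l_{i,j}}$; setting $l := \max_{i,j} l_{i,j}$ then gives $\closure{\Im(\phi^*)} \subseteq Z$, where I define
\[ Z := \prod_{i=0}^{e} Z_{i,l}^{k_i}. \]

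Next I would check the finite-equation condition on $Z$. By Lemma~\ref{lem:Z_l is f.gen.}, each $Z_{i,l} \subseteq \Spec(B_i)$ is cut out by finitely many elements of $B_i$. Since each such element pulls back along the inclusion $B_i \hookrightarrow B$ of one of the tensor factors, the product $Z$ is defined inside $\Spec(B)$ by finitely many equations drawn from $B$, as required.

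For the other containment $Z \subseteq \Im(\psi^*)$, I would apply Proposition~\ref{prop:induction} and Lemma~\ref{lem:Z} factor-by-factor: for each $i$, these lemmas produce an $\FI$-algebra $A_i$ that is finitely generated in width at most $1$ together with a morphism $\psi_i \colon B_i \to A_i$ such that $Z_{i,l} \subseteq \Im(\psi_i^*)$. I would then set
\[ A' := \bigotimes_{i=0}^{e} A_i^{\otimes k_i}, \qquad \psi := \bigotimes_{i=0}^{e} \psi_i^{\otimes k_i} \colon B \to A'. \]
The algebra $A'$ is finitely generated in width at most $1$: each $A_i$ is a quotient of some $B[k'_0(i),k'_1(i)]$ by Lemma~\ref{lem:surjectivity}, and a tensor product of such algebras is again of the form $B[K_0,K_1]$, which is visibly width-$\leq 1$ (Remark~\ref{re:B[K_0,...,k_e]}(2)). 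Dually, $\psi^*$ is the literal Cartesian product of the maps $\psi_i^*$ on the tensor factors, so
\[ \Im(\psi^*) = \prod_{i=0}^{e} \Im(\psi_i^*)^{k_i} \supseteq \prod_{i=0}^{e} Z_{i,l}^{k_i} = Z, \]
completing the chain $\closure{\Im(\phi^*)} \subseteq Z \subseteq \Im(\psi^*)$.

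\textbf{Main obstacle.} The genuine content of the lemma is absorbed into Lemma~\ref{lem:containment_in_a_subdeterminant_variety}, Lemma~\ref{lem:Z_l is f.gen.}, Proposition~\ref{prop:induction} and Lemma~\ref{lem:Z}; what remains is essentially bookkeeping. The only point requiring care is the compatibility between tensor products of $\FI$-algebras and products of $\FIop$-schemes: one must verify both that the image of a map built out of tensor-factor components factors through the product of the factor-wise images (for the inclusion $\closure{\Im(\phi^*)}\subseteq Z$) and that it equals that product (for $Z \subseteq \Im(\psi^*)$). Both assertions follow immediately from the universal property of the tensor product together with Lemma~\ref{lem:homomorphism} and Remark~\ref{re:universality}.
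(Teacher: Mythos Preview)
Your proof is correct and follows essentially the same strategy as the paper's. The only cosmetic difference is that the paper packages the argument as an induction on the number of tensor factors of $B$ (splitting $B=C\otimes B_{d_n}$ and applying the inductive hypothesis to $C$), whereas you decompose $B$ into all its factors at once; the underlying ingredients---Lemma~\ref{lem:containment_in_a_subdeterminant_variety}, Lemma~\ref{lem:Z_l is f.gen.}, Proposition~\ref{prop:induction}, Lemma~\ref{lem:Z}, and the identification $\Im((\psi_1\otimes\psi_2)^*)=\Im(\psi_1^*)\times\Im(\psi_2^*)$---are identical.
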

\begin{proof}
We will prove this by an induction on the number of $\FI$-algebras appearing in the tensor product $B=B[k_0,k_1,...,k_{e}]$.

For the base case, suppose that $B=B_d$ for some nonnegative
integer $d$. Then by Lemma
\ref{lem:containment_in_a_subdeterminant_variety}, there
exists a positive integer $l$ such that
$\closure{\Im(\phi^*)}$ is a closed subset of $Z_{d,l}$. By
Lemma \ref{lem:Z_l is f.gen.}, $Z_{d,l}$ is defined by
finitely many equations in $B_d$. We take $Z=Z_{d,l}$. By
Proposition \ref{prop:induction} and Lemma \ref{lem:Z},
there exists a homomorphism $\psi : B_d \rightarrow A'$ of
$\FI$-algebras such that $Z=Z_{d,l}$ is contained in the image $\psi^*(\Spec A')$, where $A'$ is an $\FI$-algebra that is finitely generated in width at most 1.

Now suppose that $B=B_{d_1} \otimes B_{d_2} \otimes \cdots
\otimes B_{d_{n}}$. Here $d_i$'s are not necessarily
distinct. We write $B= C \otimes B_{d_n}$ where $C=B_{d_1}
\otimes B_{d_2} \otimes \cdots \otimes B_{d_{n-1}}$. We
think of $C$ and $B_{d_n}$ as subalgebras of $B$ via the
natural inclusions. Denote by $\phi_1$ and $\phi_2$ the
restrictions of the map $\phi: B \to A$ to $C$ and $B_{d_n}$
respectively. Then by the induction hypothesis, there exist
closed subsets $Z_1\subset \spec C$ and $Z_2\subset \Spec
B_{d_n}$ defined by finitely many equations in $C$ and in
$B_{d_n}$ respectively, $\FI$-algebras $A_1$ and $A_2$ both
are finitely generated in width at most $1$, and homomorphisms $\psi_1:C \to A_1$, $\psi_2:B_{d_n} \to A_2$ such that 
\begin{center}
$\closure{\Im(\phi_1^*)} \subset Z_1 \subset \Im(\psi_1^*)$ and $\closure{\Im(\phi_2^*)} \subset Z_2 \subset \Im(\psi_2^*)$.
\end{center}
We now have
\[ \overline{\Im(\phi^*)} \subseteq \overline{\Im(\phi_1^*)}
\times \overline{\Im(\phi_2^*)} \text{ and }
\Im(\psi_1^*)
\times \Im(\psi_2^*)=\Im((\psi_1 \otimes \psi_2)^*),\] 
where $\psi_1 \otimes \psi_2$ is the $\FI$-algebra homomorphism $C
\otimes B_{d_n} \to A_1\otimes A_2$ obtained as the tensor
product of $\psi_1$ and $\psi_2$. 

Note that the $\FI$-algebra $A_1 \otimes A_2$, being the tensor product of $\FI$-algebras that
are finitely generated in width at most $1$, is itself also finitely generated
in width at most $1$. Further, the product
$Z:=Z_1 \times Z_2$ is defined by union of the finite sets of equations
defining $Z_1$ and $Z_2$. Thus this $Z$, and $A':=A_1 \otimes
A_2$, and $\psi:=\psi_1\otimes \psi_2$ have the required
property $\closure{\Im(\phi^*)} \subset Z \subset \Im(\psi^*)$.
\end{proof}

\begin{proof}[Proof of the Main Theorem] Recall from Section \ref{sec:free} that without loss of generality we can assume that both $\FI$-algebras $B$ and $A$ are free, that is, $B=B[k_0,k_1,...,k_{e}]$ and $A=B_1^{\otimes k}$ for some nonnegative integers $k, k_0,k_1,...,k_e$.

From Lemma \ref{lem:Z over the tensor product}, $\closure{\Im(\phi^*)}$
is contained in a closed subset $Z \subset \Spec B$ which is defined
by finitely many equations in $B$, moreover, $Z$ is contained in
the image $\Im(\psi^*)$ of a width-one $\FIop$-scheme of finite type. Recall from
Remark \ref{re:SpecNoetherianity} that width-one $\FIop$-schemes of
finite type are topologically Noetherian and from Lemma \ref{lem:image}
that the image of a topologically Noetherian space is topologically
Noetherian. By Lemma \ref{lem:subspace}, the subset $Z$,
being a subset of the topologically Noetherian space
$\Im(\psi^*)$, is topologically
Noetherian. Thus
$\closure{\Im(\phi^*)}$ is topologically Noetherian being a closed
subset of $Z$ and defined by finitely many equations in the coordinate
ring of $Z$. Since $Z$ is defined by finitely many equations in $B$,
this implies that $\closure{\Im(\phi^*)}$ is defined by finitely many
equation in $B$. This completes the proof.
\end{proof}

\printbibliography

@article {aschenbrenner-hillar:finitesymmetric,
    AUTHOR = {Aschenbrenner, Matthias and Hillar, Christopher J.},
     TITLE = {Finite generation of symmetric ideals},
   JOURNAL = {Trans. Amer. Math. Soc.},
  FJOURNAL = {Transactions of the American Mathematical Society},
    VOLUME = {359},
      YEAR = {2007},
    NUMBER = {11},
     PAGES = {5171--5192},
      ISSN = {0002-9947},
   MRCLASS = {13E15 (06A07 13A50 13P10)},
  MRNUMBER = {2327026},
MRREVIEWER = {Paulo F. Machado},
       DOI = {10.1090/S0002-9947-07-04116-5},
       URL = {https://doi.org/10.1090/S0002-9947-07-04116-5},
}

@article{Betti_numbers_of_symmetric_shifted_ideals,
	doi = {10.1016/j.jalgebra.2020.04.037},
	url = {https://doi.org/10.1016%2Fj.jalgebra.2020.04.037},
	year = 2020,
	publisher = {Elsevier {BV}
},
	volume = {560},
	pages = {312--342},
	author = {Jennifer Biermann and Hern{\'{a}}n de Alba and Federico Galetto and Satoshi Murai and Uwe Nagel and Augustine O{\textquotesingle}Keefe and Tim Römer and Alexandra Seceleanu},
	title = {Betti numbers of symmetric shifted ideals},
	journal = {Journal of Algebra}
}

@article{church-ellenberg-farb:FImodules,
   title={FI-modules and stability for representations of symmetric groups},
   volume={164},
   ISSN={0012-7094},
   url={http://dx.doi.org/10.1215/00127094-3120274},
   DOI={10.1215/00127094-3120274},
   number={9},
   journal={Duke Mathematical Journal},
   publisher={Duke University Press},
   author={Church, Thomas and Ellenberg, Jordan S. and Farb, Benson},
   year={2015},
   month={06}
}

@incollection {cohen:closure,
    AUTHOR = {Cohen, Daniel E.},
     TITLE = {Closure relations. {B}uchberger's algorithm, and polynomials
              in infinitely many variables},
 BOOKTITLE = {Computation theory and logic},
    SERIES = {Lecture Notes in Comput. Sci.},
    VOLUME = {270},
     PAGES = {78--87},
 PUBLISHER = {Springer, Berlin},
      YEAR = {1987},
   MRCLASS = {68Q40 (13B99)},
  MRNUMBER = {907514},
       DOI = {10.1007/3-540-18170-9\_156},
       URL = {https://doi.org/10.1007/3-540-18170-9_156},
}

@article {cohen:metabelian,
    AUTHOR = {Cohen, D. E.},
     TITLE = {On the laws of a metabelian variety},
   JOURNAL = {J. Algebra},
  FJOURNAL = {Journal of Algebra},
    VOLUME = {5},
      YEAR = {1967},
     PAGES = {267--273},
      ISSN = {0021-8693},
   MRCLASS = {20.80 (13.25)},
  MRNUMBER = {206104},
MRREVIEWER = {M. F. Newman},
       DOI = {10.1016/0021-8693(67)90039-7},
       URL = {https://doi.org/10.1016/0021-8693(67)90039-7},
}

@article {draisma:kfactor,
    AUTHOR = {Draisma, Jan},
     TITLE = {Finiteness for the {$k$}-factor model and chirality varieties},
   JOURNAL = {Adv. Math.},
  FJOURNAL = {Advances in Mathematics},
    VOLUME = {223},
      YEAR = {2010},
    NUMBER = {1},
     PAGES = {243--256},
      ISSN = {0001-8708},
   MRCLASS = {14M12 (13P25 14M15)},
  MRNUMBER = {2563217},
MRREVIEWER = {Seth Sullivant},
       DOI = {10.1016/j.aim.2009.08.008},
       URL = {https://doi.org/10.1016/j.aim.2009.08.008},
}

@Article{draisma-eggermont-farooq:components,
 Author = {Draisma, Jan and Eggermont, Rob and Farooq, Azhar},
 Title = {Components of symmetric wide-matrix varieties},
 FJournal = {Journal f{\"u}r die Reine und Angewandte Mathematik},
 Journal = {J. Reine Angew. Math.},
 ISSN = {0075-4102},
 Volume = {793},
 Pages = {143--184},
 Year = {2022},
 DOI = {10.1515/crelle-2022-0064},
 Keywords = {14-XX},
 zbMATH = {7623471},
 eprint={2103.05415},
 archivePrefix={arXiv},
 primaryClass={math.AC}
}

@article{draisma-kutter:boundedtensors,
   title={Bounded-rank tensors are defined in bounded degree},
   volume={163},
   ISSN={0012-7094},
   url={http://dx.doi.org/10.1215/00127094-2405170},
   DOI={10.1215/00127094-2405170},
   number={1},
   journal={Duke Mathematical Journal},
   publisher={Duke University Press},
   author={Draisma, Jan and Kuttler, Jochen},
   year={2014},
   month={01}
}

@article{Drton_2006,
	doi = {10.1007/s00440-006-0033-2},
	url = {https://doi.org/10.1007%2Fs00440-006-0033-2},
	year = 2006,
	month = {11},
	publisher = {Springer Science and Business Media {LLC}
},
	volume = {138},
	number = {3-4},
	pages = {463--493},
	author = {Mathias Drton and Bernd Sturmfels and Seth Sullivant},
	title = {Algebraic factor analysis: tetrads, pentads and beyond},
	journal = {Probability Theory and Related Fields}
}

@article {hillar-sullivant:GBinfdim,
    AUTHOR = {Hillar, Christopher J. and Sullivant, Seth},
     TITLE = {Finite {G}r\"{o}bner bases in infinite dimensional polynomial
              rings and applications},
   JOURNAL = {Adv. Math.},
  FJOURNAL = {Advances in Mathematics},
    VOLUME = {229},
      YEAR = {2012},
    NUMBER = {1},
     PAGES = {1--25},
      ISSN = {0001-8708},
   MRCLASS = {16S36 (13F20 16Z05)},
  MRNUMBER = {2854168},
MRREVIEWER = {Gerhard Pfister},
       DOI = {10.1016/j.aim.2011.08.009},
       URL = {https://doi.org/10.1016/j.aim.2011.08.009},
}

@article{NAGEL2017204,
title = {Equivariant Hilbert series in non-noetherian polynomial rings},
journal = {Journal of Algebra},
volume = {486},
pages = {204-245},
year = {2017},
issn = {0021-8693},
doi = {https://doi.org/10.1016/j.jalgebra.2017.05.011},
url = {https://www.sciencedirect.com/science/article/pii/S0021869317302910},
author = {Uwe Nagel and Tim Römer},
}

@article{Nagel-Romer,
	year = 2019,
	publisher = {Elsevier {BV}
},
	volume = {535},
	pages = {286--322},
	author = {Uwe Nagel and Tim Römer},
	title = {{FI}- and {OI}-modules with varying coefficients},
	journal = {Journal of Algebra}
}

@misc{DEKL,
  url = {https://arxiv.org/abs/1306.0828},
  author = {Draisma, Jan and Eggermont, Rob H. and Krone, Robert and Leykin, Anton},
  keywords = {Commutative Algebra},
  title = {Noetherianity for infinite-dimensional toric varieties},
  publisher = {arXiv},
  year = {2013},
  copyright = {arXiv.org perpetual, non-exclusive license}
}

@misc{vanle2022regularity,
      title={On regularity and projective dimension up to symmetry}, 
      author={Dinh Van Le and Hop D. Nguyen},
      year={2022},
      eprint={2206.15141},
      archivePrefix={arXiv},
      primaryClass={math.AC}
}

@Article{Kahle22,
 Author = {Kahle, Thomas and Van Le, Dinh and R{\"o}mer, Tim},
 Title = {Invariant chains in algebra and discrete geometry},
 FJournal = {SIAM Journal on Discrete Mathematics},
 Journal = {SIAM J. Discrete Math.},
 ISSN = {0895-4801},
 Volume = {36},
 Number = {2},
 Pages = {975--999},
 Year = {2022},
 DOI = {10.1137/20M1385652},
 Keywords = {05E18,90C05,13A50,20M30,52B15,13F20},
 zbMATH = {7510400},
 Zbl = {1486.05317}
}

@Article{Le21,
 Author = {Le, Dinh Van and Nagel, Uwe and Nguyen, Hop D. and R{\"o}mer, Tim},
 Title = {Castelnuovo-{Mumford} regularity up to symmetry},
 FJournal = {IMRN. International Mathematics Research Notices},
 Journal = {Int. Math. Res. Not.},
 ISSN = {1073-7928},
 Volume = {2021},
 Number = {14},
 Pages = {11010--11049},
 Year = {2021},
 DOI = {10.1093/imrn/rnz382},
 Keywords = {13D02,13F20,16W22},
 zbMATH = {7456041},
 Zbl = {1478.13022}
}

@Article{Le20,
 Author = {van Le, Dinh and Nagel, Uwe and Nguyen, Hop D. and R{\"o}mer, Tim},
 Title = {Codimension and projective dimension up to symmetry},
 FJournal = {Mathematische Nachrichten},
 Journal = {Math. Nachr.},
 ISSN = {0025-584X},
 Volume = {293},
 Number = {2},
 Pages = {346--362},
 Year = {2020},
 DOI = {10.1002/mana.201800413},
 Keywords = {13A50,13C15,13D02,13F20,16P70,16W22},
 zbMATH = {7198942}
}

@Article{Brouwer11,
 Author = {Brouwer, Andries E. and Draisma, Jan},
 Title = {Equivariant {Gr{\"o}bner} bases and the {Gaussian} two-factor model},
 FJournal = {Mathematics of Computation},
 Journal = {Math. Comput.},
 ISSN = {0025-5718},
 %Volume = {80},
 %Number = {274},
 
 Pages = {1123--1133},
 
 Year = {2011},
 DOI = {10.1090/S0025-5718-2010-02415-9},
 Keywords = {13P10,16W22,62H25,13P25},
 zbMATH = {5879829},
 Zbl = {1211.13018}
}

@article{Nagpal21,
note={Preprint, arXiv:2107.13027},
title={Symmetric ideals of the infinite polynomial ring},
author={Rohit Nagpal and Andrew Snowden},
year={2022}}

@article{Nagpal20,
note={Preprint, arXiv:2011.09009},
title={Symmetric subvarieties of infinite affine space},
author={Rohit Nagpal and Andrew Snowden}}

@Article{Krone17,
 Author = {Krone, Robert and Leykin, Anton and Snowden, Andrew},
 Title = {Hilbert series of symmetric ideals in infinite polynomial rings via formal languages},
 FJournal = {Journal of Algebra},
 Journal = {J. Algebra},
 ISSN = {0021-8693},
 Volume = {485},
 Pages = {353--362},
 Year = {2017},
 DOI = {10.1016/j.jalgebra.2017.05.014},
 Keywords = {13D40,13P10},
 zbMATH = {6730586},
 Zbl = {1401.13048}
}

@article{Kummer22,
  doi = {10.48550/ARXIV.2203.11921},
  
  url = {https://arxiv.org/abs/2203.11921},
  
  author = {Kummer, Mario and Riener, Cordian},
  
  keywords = {Algebraic Geometry (math.AG), FOS: Mathematics, FOS: Mathematics, 13E05, 14P10},
  
  title = {Equivariant algebraic and semi-algebraic geometry of infinite affine space},
  
  publisher = {arXiv},
  
  year = {2022},
  
  copyright = {arXiv.org perpetual, non-exclusive license}
}

@misc{Draisma23a,
title={$\mathbf{FI}$-Algebras, $\mathbf{OI}$-Algebras, $S_\infty$-ALGEBRAS:
examples and counterexamples},
author={Jan Draisma and Alexei Krasilnikov and Robert
Krone},
note={Manuscript in preparation}}
\end{document}